\titleformat{\section}[hang]%
{\bf\filcenter\large}{\thesection.}{1ex}{}%
\titleformat{\subsection}[runin]%
{\bfseries\normalsize}{}{0ex}{}
\newcommand\adjunction[2]{\xymatrix@=8ex{\ar@{}[r]|{\perp}\ar@<-1mm>@/_2mm/[r]_{{#2}} & \ar@<-1mm>@/_2mm/[l]_{{#1}}}}
\theoremstyle{plain}
\newtheorem{theorem}{Theorem}[section]
\newtheorem{lemma}[theorem]{Lemma}
\newtheorem{proposition}[theorem]{Proposition}
\newtheorem{corollary}[theorem]{Corollary}
\theoremstyle{definition}
\newtheorem{definition}[theorem]{Definition}
\newtheorem{examples}[theorem]{Examples}
\newtheorem{example}[theorem]{Example}
\theoremstyle{remark}
\newtheorem{remark}[theorem]{Remark}
\newenvironment{eqcond}{\begin{enumerate}\renewcommand{\theenumi}{\roman{enumi}}}{\end{enumerate}}
\renewcommand{\theenumi}{\arabic{enumi}}
\newcommand{\field}[1]{\mathbb{#1}}
\newcommand{\R}{\field{R}}
\newcommand{\fra}{\mathfrak{a}}
\newcommand{\frf}{\mathfrak{f}}
\newcommand{\frj}{\mathfrak{j}}
\newcommand{\frp}{\mathfrak{p}}
\newcommand{\frq}{\mathfrak{q}}
\newcommand{\frw}{\mathfrak{w}}
\newcommand{\frv}{\mathfrak{v}}
\newcommand{\frx}{\mathfrak{x}}
\newcommand{\fry}{\mathfrak{y}}
\newcommand{\calA}{\mathcal{A}}
\newcommand{\calB}{\mathcal{B}}
\newcommand{\frX}{\mathfrak{X}}
\DeclareMathOperator{\ssup}{sup}
\DeclareMathOperator{\ev}{ev}
\DeclareMathOperator{\id}{id}
\DeclareMathOperator{\Id}{Id}
\DeclareMathOperator{\ForgetToV}{S}
\DeclareMathOperator{\ForgetToVAd}{A}
\DeclareMathOperator{\MFunctor}{M}
\DeclareMathOperator{\Discrete}{D}
\DeclareMathOperator{\pt}{pt}
\DeclareMathOperator{\yoneda}{y}
\newcommand{\mate}[1]{\,^\ulcorner\! #1^\urcorner}
\newcommand{\umate}[1]{\,_\llcorner\! #1_\lrcorner}
\newcommand{\fspstr}[2]{\llbracket #1,#2\rrbracket}
\newcommand{\catfont}[1]{\mathsf{#1}}
\newcommand{\V}{\catfont{V}}
\newcommand{\two}{\catfont{2}}
\newcommand{\SET}{\catfont{Set}}
\newcommand{\REL}{\catfont{Rel}}
\newcommand{\TOP}{\catfont{Top}}
\newcommand{\FRM}{\catfont{Frm}}
\newcommand{\AP}{\catfont{App}}
\newcommand{\ORD}{\catfont{Ord}}
\newcommand{\MET}{\catfont{Met}}
\newcommand{\Mat}[1]{#1\text{-}\catfont{Mat}}
\newcommand{\Mod}[1]{#1\text{-}\catfont{Dist}}
\newcommand{\Map}{\catfont{Map}}
\newcommand{\UMat}[1]{#1\text{-}\catfont{URel}}
\newcommand{\Frm}[1]{#1\text{-}\catfont{Frm}}
\newcommand{\Cat}[1]{#1\text{-}\catfont{Cat}}
\newcommand{\Cont}[1]{#1\text{-}\catfont{Cont}}
\newcommand{\Gph}[1]{#1\text{-}\catfont{Gph}}
\def\to{\longrightarrow}
\newcommand{\natto}{\Rightarrow}
\newcommand{\relto}{{\longrightarrow\hspace*{-2.8ex}{\mapstochar}\hspace*{2.6ex}}}
\newcommand{\modto}{{\longrightarrow\hspace*{-2.8ex}{\circ}\hspace*{1.2ex}}}
\newcommand{\kto}{\relbar\joinrel\rightharpoonup}
\newcommand{\krelto}{\,{\kto\hspace*{-2.5ex}{\mapstochar}\hspace*{2.6ex}}}
\newcommand{\kmodto}{\,{\kto\hspace*{-2.8ex}{\circ}\hspace*{1.3ex}}}
\newcommand{\kleisli}{\circ}
\newcommand{\homkleislileft}{\multimap} 
\newcommand{\homkleisliright}{\multimapinv} 
\newcommand{\homcompleft}{\multimapdot} 
\newcommand{\homcompright}{\multimapdotinv} 
\newcommand{\Txi}{T_{\!\!_\xi}}
\newcommand{\Uxi}{U_{\!\!_\xi}}
\newcommand{\monadfont}[1]{\mathbb{#1}}
\newcommand{\mT}{\monadfont{T}}
\newcommand{\mU}{\monadfont{U}}
\newcommand{\mL}{\monadfont{L}}
\newcommand{\umonad}{(U,m,e)}
\newcommand{\wmonad}{(L,m,e)}
\newcommand{\theoryfont}[1]{\mathscr{#1}}
\newcommand{\Tth}{\theoryfont{T}}
\newcommand{\Ith}{\theoryfont{I}}
\newcommand{\Uth}{\theoryfont{U}}
\newcommand{\Wth}{\theoryfont{L}}
\newcommand{\toptheory}{(\mT,\V,\xi)}
\newcommand{\doo}[1]{\overset{\centerdot}{#1}}
\newcommand{\op}{\mathrm{op}}
\newcommand{\co}{\mathrm{co}}
\def\:{\colon}
\def\tensor{\otimes}
\begin{document}

\title{Towards Stone duality for topological theories}

\author{Dirk Hofmann}
\thanks{Partial financial assistance by Unidade de Investiga\c{c}\~{a}o e Desenvolvimento Matem\'{a}tica e Aplica\c{c}\~{o}es da Universidade de Aveiro/FCT and the project MONDRIAN (under the contract PTDC/EIA-CCO/108302/2008) is gratefully acknowledged}
\address{Departamento de Matem\'{a}tica\\ Universidade de Aveiro\\3810-193 Aveiro\\ Portugal}
\email{dirk@ua.pt}

\author{Isar Stubbe}
\address{Laboratoire de Math\'ematiques Pures et Appliqu\'ees \\ Universit\'e du Littoral-C\^ote d'Opale \\ France}
\email{isar.stubbe@lmpa.univ-littoral.fr}

\date{Submitted 1 April 2010, revised 27 October 2010.}

\begin{abstract}
In the context of categorical topology, more precisely that of $\Tth$-categories \cite{Hof_TopTh}, we define the notion of $\Tth$-colimit as a particular colimit in a $\V$-category. A complete and cocomplete $\V$-category in which limits distribute over $\Tth$-colimits, is to be thought of as the generalisation of a (co-)frame to this categorical level. We explain some ideas on a $\Tth$-categorical version of ``Stone duality'', and show that Cauchy completeness of a $\Tth$-category is precisely its sobriety. 
\end{abstract}

\subjclass[2010]{06D22, 18B30, 18B35, 18C15, 54A20, 54B30}

\keywords{Topological space, frame, duality, quantale, $\V$-category, monad, topological theory}

\maketitle

\section*{Introduction}

Let $X$ be a topological space, then $\Omega(X)$, its collection of open subsets, is a {\em frame}: a complete lattice in which finite infima distribute over arbitrary suprema. If $f\:X\to Y$ is a continuous function between topological spaces, then its inverse image $\Omega(f)\:\Omega(Y)\to\Omega(X)$ is a {\em frame homomorphism}, i.e.\ a (necessarily order-preserving) function that preserves finite infima and arbitrary suprema. Thus we obtain a contravariant functor, $\Omega\:\TOP^\op\to\FRM$, from the category of topological spaces and continuous functors to that of frames and frame homomorphisms. It is well known that this functor admits a left adjoint
$$\TOP^\op\adjunction{\pt}{\Omega}\FRM$$
which assigns to any frame $F$ the topological space $\pt(F)$ of its {\em points}: it is the set $\FRM(F,\two)$ of frame homomorphisms from $F$ to the two-element chain, with open subsets $\{\{p\in\pt(F)\mid p(a)=1\}\mid a\in F\}$. If the natural continuous comparison $\eta_X\:X\to\pt(\Omega(X))$ is bijective (in which case it actually is a homeomorphism), then $X$ is said to be {\em sober}. (And because $X$ is $T_0$ if and only if $\eta_X$ is injective, we get that a $T_0$ space is sober if and only if $\eta_X$ is surjective.) Much more can be said about the interplay between topological spaces and frames; we refer to the classic \cite{Joh_StoneSp}.

Since M. Barr's work \cite{Bar_RelAlg} we know that topological spaces and continuous functions are precisely the lax algebras and their lax homomorphisms for the lax extension to $\REL$ of the ultrafilter monad on $\SET$. (The algebras for the monad itself are the compact Hausdorff spaces.) With this in mind, in recent years others have studied more generally the lax extension of monads $T\:\SET\to\SET$ to the category $\Mat{\V}$ of matrices with elements in a quantale $\V$ \cite{CH_TopFeat,CT_MultiCat,SEAL_LaxAlg}: the lax algebras, often referred to as $(T,\V)$-categories or $(T,\V)$-algebras in those references, but we shall call them simply $\Tth$-categories as in \cite{Hof_TopTh}, are then to be thought of as ``topological categories''. Examples include, beside topological spaces, also approach spaces, $\V$-enriched categories, metric spaces, multicategories, and more.

Altogether this then raises a natural question: how should we define ``$\Tth$-frames'' as the analogue of frames? Is there any hope for a duality between $\Tth$-categories and ``$\Tth$-frames'', generalising that between topological spaces and frames? This is the problem that we address in this paper. 

More exactly, we study the generalisation of the notion of {\em co-frame} in the context of $\Tth$-categories. To give an idea of the main difficulty, reconsider the definition: a co-frame is a complete ordered set in which {\em finite} suprema distribute over (arbitrary) infima. To translate this statement to the context of $\V$-enriched categories, we know that infima and suprema will become enriched limits and colimits, and the distributivity will be expressed by a certain functor being continuous (see e.g.\ \cite{Kelly_Schmitt} for examples in the realm of enriched categories). But how should we translate the {\em finiteness} of the involved suprema/colimits? This is precisely the point where, besides the categorical data (i.e.\ the categories enriched in a quantale $\V$), we must make use of the additional topological data (i.e.\ the monad $T$ on $\SET$): in Definition \ref{T-suprema} we thus propose the notion of ``$\Tth$-supremum'' in a $\V$-category, to be thought of as a ``finite supremum'', where the {\em finiteness} relates (perhaps not surprisingly) to the notion of {\em compactness} relative to the given monad $T$, as developed in \cite{Hof_TopTh}. (More generally, we define ``$\Tth$-colimits'' in Definition \ref{T-colimit}; and a $\V$-category is $\Tth$-cocomplete if and only if it is tensored and has $\Tth$-suprema, cf.\ Proposition \ref{T-cocomplete}.) In Definition \ref{TFrm} we then define a ``$\Tth$-frame'' to be a complete $\V$-category in which $\Tth$-suprema suitably distribute over limits. (Note that we speak of $\Tth$-frames even though we generalise the notion of co-frames.) Of course, these notions are so devised that, when applied to $\V=\two$ (= the two-element chain) and $T=U$ (= the ultrafilter monad), so that $\Tth$-categories are precisely topological spaces, we eventually recover the ordinary co-frames, as shown in Proposition \ref{ultrafilter_case} and further on. For the general case, we show in Corollaries \ref{functor1} and \ref{functor2} that there is a pair of functors
$$\Cat{\Tth}^\op\xymatrix@=8ex{\ar@<-1mm>@/_2mm/[r]_{\Omega} & \ar@<-1mm>@/_2mm/[l]_{\pt}}\Frm{\Tth}.$$
Even though at this point we are unable to prove that these are adjoint, we do show in Proposition \ref{nattrans} that there is a natural transformation $\Id\natto\pt\circ\Omega$; and in Theorem \ref{MainThm} we do prove that the $\Tth$-categories for which the comparison $X\to\pt(\Omega(X))$ is surjective, are precisely those which are {\em Cauchy complete}, which is indeed the expected generalisation of sobriety \cite{Law_Metric,CH_Compl}. 

We see this work as a first step towards an eventual ``Stone duality'' for $\Tth$-categories, and hope that by explaining our ideas, further research on this topic shall be stimulated.

\section{The setting: strict topological theories}

M. Barr \cite{Bar_RelAlg} showed in what sense topological spaces can be thought of as algebras: If we write $U\:\SET\to\SET$ for the ultrafilter monad, with multiplication $m\:U\circ U\natto U$ and unit $e\:\Id_{\SET}\natto U$, then its category of Eilenberg-Moore algebras is precisely that of compact Hausdorff spaces and continuous maps \cite{Man_TriplCompAlg}. But $U$ admits a {\em lax extension} to $\REL$, the quantaloid of sets and relations: define $U'\:\REL\to\REL$ to agree with $U$ on the objects, and for a relation $r\:X\relto Y$ with projection maps $p\:R\to X$ and $q\:R\to Y$ put $U'(r)=Uq\cdot(Up)^\circ$. Then $U'$ is still a functor, and the unit and the multiplication of the ultrafilter monad become oplax natural transformations. Hence $(U',m,e)$ is no longer a monad but rather a {\em lax monad}. Nevertheless, the {\em lax algebras} for $(U',m,e)$ are precisely topological spaces, and the lax algebra homomorphisms turn out to be exactly the continuous maps.

This situation can be generalised, not only by considering other monads $(T,m,e)\:\SET\to\SET$ besides the ultrafilter monad, but also by studying their lax extensions to quantaloids $\Mat{\V}$ of matrices with elements in a commutative quantale $\V$. (In this paper, $\V=(\V,\bigvee,\tensor,k)$ will always stand for a commutative, unital quantale: $(\V,\bigvee)$ is a complete lattice, in which the supremum of a family $(x_i)_{i\in i}$ is written as $\bigvee_ix_i$, together with an associative and commutative operation $\V\times\V\to\V\:(x,y)\mapsto x\tensor y$ with two-sided unit $k\in\V$, such that both $x\tensor-$ and $-\tensor y$ preserve arbitrary suprema. When one takes $\V$ to be the two-element chain, then it turns out that $\Mat{\V}$ is simply $\REL$, as we explain further on.) The lax algebras for a lax extension of $T$ to $\Mat{\V}$ are then to be thought of as ``topological categories''.  Of course one has to put conditions on the involved monad and quantale to prove results (in fact, to even define a lax extension and its lax algebras). Over the last decade, several categorical topologists have considered different conditions on $T$ and $\V$ \cite{CH_ExtLax,SEAL_LaxAlg,SEAL_Kleisli}; in this paper we shall use, up to a slight rephrasing, the notion of {\em strict topological theory} as recently put forward in \cite{Hof_TopTh}.

\begin{definition}
A \emph{strict topological theory} $\Tth=\toptheory$ consists of:
\begin{enumerate}
\item a monad $\mT=(T,m,e)$ on $\SET$ (with multiplication $m$ and unit $e$),
\item a commutative quantale $\V=(\V,\bigvee,\tensor,k)$,
\item a function $\xi\:T(\V)\to\V$,
\end{enumerate}
such that
\begin{enumerate}\renewcommand{\theenumi}{\alph{enumi}}
\item $T$ sends pullbacks to weak pullbacks and each naturality square of $m$ is a weak pullback (in other words, $T$ and $m$ satisfy the B\'enabou-Beck-Chevalley condition),
\item\label{condition} $(\V,\xi)$ is a $\mT$-algebra and the monoid structure on $\V$ in $(\SET,\times,1)$ lifts to monoid structure on $(\V,\xi)$ in $(\SET^\mT,\times,1)$,
\item writing $P_{\V}\:\SET\to\ORD$ for the functor that sends a function $f\:X\to Y$ to the left adjoint of the ``inverse image'' $f^{-1}\:\V^Y\to\V^X\:\varphi\mapsto\varphi\cdot f$ (where $\V^X$ is the set of functions from $X$ to $\V$, with pointwise order), the functions $\xi_X\:V^X\to V^{T(X)}\:f\mapsto \xi\cdot T(f)$ (for $X$ in $\SET$) are the components of a natural transformation $(\xi_X)_X\:P_{\V}\natto P_{\V}\circ T$.
\end{enumerate}
\end{definition}
Regarding condition (\ref{condition}) in the above definition, note that a quantale $\V$ is, in particular, a {\it set} equipped with {\em functions} $\V\times\V\to\V\:(x,y)\mapsto x\tensor y$ and $1\to\V\:*\mapsto k$ (where $1=\{*\}$ is a generic singleton) satisfying (diagrammatic) associativity and unit axioms; put briefly, $(\V,\tensor,k)$ is a monoid in the cartesian category $\SET$. But now we ask for a function $\xi\:T(\V)\to\V$ making $(\V,\xi)$ a {\em $\mT$-algebra}, hence it is natural to require that the functions $(x,y)\mapsto x\tensor y$ and $*\mapsto k$ are in fact {\em $\mT$-homomorphisms}, that is, the following diagrams have to commute:
\[
\xymatrix@C=10ex{
T(\V\times \V)\ar[r]^{T(-\tensor-)}\ar[d]_{\langle\xi\cdot T\pi_1,\xi\cdot T\pi_2\rangle} & T(\V)\ar[d]^{\xi} & T(1)\ar[d]^{!}\ar[l]_{T(k)} \\
\V\times\V\ar[r]_{-\tensor-} & \V & 1\ar[l]^{k}}
\]
Put differently, the monoidal structure $(\V,\tensor,k)$ must lift from $\SET$ to the cartesian category $\SET^{\mT}$ of $\mT$-algebras and homomorphisms. Moreover, it then follows -- as shown in \cite[Lemma 3.2]{Hof_TopTh} -- that the {\it closed} structure on $\V$, in other words, the ``internal hom'' defined by $x\tensor y\leq z\iff x\leq\hom(y,z)$, then automatically satisfies
\[
\xymatrix{T(\V\times\V)\ar[rr]^-{T(\hom)}\ar[d]_{\langle\xi\cdot T\pi_1,\xi\cdot T\pi_2\rangle}\ar@{}[drr]|{\ge} && T\V\ar[d]^\xi\\ \V\times\V\ar[rr]_-{\hom} && \V}
\]

\begin{examples}\label{ExTheories} The leading examples of strict topological theories are:
\begin{enumerate}
\item the trivial theory: For any quantale $\V$ we can consider the theory whose monad-part is the identity monad on $\SET$ and for which the required $\xi\:\V\to\V$ is the identity function. We write this trivial strict topological theory as $\Ith_{\V}$.
\item\label{ExTop} the classical ultrafilter theory: Let $\V$ be the 2-element chain $\two$ (to be thought of as the ``classical truth values''), and consider the ultrafilter monad $\mU=\umonad$ on $\SET$. Together with the obvious function $\xi\:U(\two)\to\two$ this makes up a strict topological theory which we write as $\Uth_{\two}$.
\item\label{ExApp} the metric ultrafilter theory: Let $\V$ be the quantale $([0,\infty],\bigwedge,+,0)$ of extended non-negative real numbers \cite{Law_Metric}, and consider again the ultrafilter monad $\mU=\umonad$ on $\SET$. Together with the function 
\[
\xi\:U([0,\infty])\to[0,\infty],\;\;\frx\mapsto\bigwedge\{v\in[0,\infty]\mid[0,v]\in\frx\}
\]
this makes up a strict topological theory, written $\Uth_{[0,\infty]}$.
\item\label{GenUltraTh} the general ultrafilter theory: If $\V$ is any commutative and integral quantale (meaning that $a\tensor b=b\tensor a$ and that $k=\top$) which is completely distributive, then the ultrafilter monad $\mU=\umonad$ on $\SET$ together with the function 
$$\xi\:U(\V)\to\V\:\frx\mapsto\bigwedge_{A\in\frx}\bigvee A$$
is a strict topological theory provided that $\otimes\:\V\times\V\to\V$ is continuous with respect to the compact Hausdorff topology $\xi$ on $\V$. This generalises the two previous examples; details are in \cite{Hof_TopTh}.
\item\label{WordTh} the word theory: For any quantale $\V$, the word monad $\mL=\wmonad$ on $\SET$ together with the function
$$\xi\:L(\V)\to\V\:(v_1,\ldots,v_n)\mapsto v_1\otimes\ldots\otimes v_n\ ,\ (\ )\mapsto k$$
determine a strict topological theory $\Wth_{\V}$.
\end{enumerate}
\end{examples}

In what follows we shall write $\Mat{\V}$ for the quantaloid of $\V$-matrices: its objects are sets, an arrow $r\:X\relto Y$ is a ``matrix'' whose entries are elements of $\V$, indexed by $Y\times X$. The composition of $r\:X\relto Y$ with $s\:Y\relto Z$ is $s\cdot r\:X\relto Z$ whose $(z,x)$-th element is $\bigvee_{y\in Y}s(z,y)\tensor r(y,x)$; the identity on a set $X$ is the obvious diagonal matrix, with $k$'s on the diagonal and $\bot$'s elsewhere. It is the elementwise supremum of parallel matrices that finally makes $\Mat{\V}$ a quantaloid (in fact, it is the free direct-sum completion of $\V$ in the category of quantaloids). As any quantaloid, $\Mat{\V}$ is biclosed (some authors say ``left- and right-closed'', others say simply ``closed''), in the sense that for any matrix $r\:X\relto Y$ and any object $Z$, both order-preserving functions $-\cdot r\:\Mat{\V}(Y,Z)\to\Mat{\V}(X,Z)$ and $r\cdot-\:\Mat{\V}(Z,X)\to\Mat{\V}(Z,Y)$ admit right adjoints: we shall write 
\[s\cdot r\leq t\iff s\leq t \homcompright r\mbox{ \ \ \ and \ \ \ } r\cdot p\leq q\iff p\leq r\homcompleft q\]
for these liftings and extensions. Finally we mention that mapping a matrix $r\:X\relto Y$ to $r^\circ\:Y\relto X$, defined by $r^\circ(x,y):=r(y,x)$, defines an involution $(-)^\circ\:\Mat{\V}^\op\to\Mat{\V}$.

A topological theory $\Tth=\toptheory$ allows for a lax extension of the functor $T\:\SET\to\SET$ to a 2-functor $\Txi\:\Mat{\V}\to\Mat{\V}$ as follows: we put $\Txi X=TX$ for each set $X$, and 
\begin{equation*}
\Txi r\:TY\times TX\to\V:
(\fry,\frx)\mapsto\bigvee\left\{\xi\cdot Tr(\frw)\;\Bigl\lvert\;\frw\in T(Y\times X), T\pi_1(\frw)=\fry,T\pi_2(\frw)=\frx\right\}
\end{equation*}
for each $\V$-matrix $r\:X\relto Y$. Furthermore, we have $\Txi(r^\circ)=\Txi(r)^\circ$ (and we write $\Txi r^\circ$) for each $\V$-matrix $r\:X\relto Y$, $m$ becomes a natural transformation $m\:\Txi\Txi\natto\Txi$ and $e$ an op-lax natural transformation $e\:\Id\natto\Txi$, i.e.\ $e_Y\cdot r\leq \Txi r\cdot e_X$ for all $r\:X\relto Y$ in $\Mat{\V}$.

A $\V$-matrix of the form $\alpha\:X\relto TY$ we call \emph{$\Tth$-matrix} from $X$ to $Y$, and write $\alpha\:X\krelto Y$. For $\Tth$-matrices $\alpha\:X\krelto Y$ and $\beta\:Y\krelto Z$ we define as usual the \emph{Kleisli composition}
\[\beta\kleisli\alpha:=m_X\cdot\Txi \beta\cdot\alpha.\]
This composition is associative and has the $\Tth$-matrix $e_X\:X\krelto X$ as a lax identity: $a\kleisli e_X\ge a$ and $e_Y\kleisli a= a$ for any $a\:X\krelto Y$. 

We now come to the definition of the ``topological categories'' that we were after in the first place.
\begin{definition}
Let $\Tth$ be a strict topological theory. A \emph{$\Tth$-graph} is a pair $(X,a)$ consisting of a set $X$ and a $\Tth$-matrix $a\:X\krelto X$ satisfying $e_X\le a$. A \emph{$\Tth$-category} $(X,a)$ is a $\Tth$-graph such that moreover $a\circ a\leq a$. Given two $\Tth$-graphs (resp.\ $\Tth$-categories) $(X,a)$ and $(Y,b)$, a function $f\:X\to Y$ is a \emph{$\Tth$-graph morphism} (resp.\ \emph{$\Tth$-functor}) if $Tf\cdot a\le b\cdot f$. Given two $\Tth$-categories $(X,a)$ and $(Y,b)$, a $\Tth$-matrix $\varphi\:X\krelto Y$ is a \emph{$\Tth$-distributor}, denoted as $\varphi\:(X,a)\kmodto (Y,b)$, if $\varphi\kleisli a\le\varphi$ and $b\kleisli \varphi\le \varphi$.
\end{definition}

\begin{proposition}
Let $\Tth$ be a strict topological theory. $\Tth$-graphs and $\Tth$-graph morphisms, resp.\ $\Tth$-categories and $\Tth$-functors, form a category $\Gph{\Tth}$, resp.\ $\Cat{\Tth}$, for the obvious composition and identities. $\Tth$-categories and $\Tth$-distributors between them form a locally ordered category $\Mod{\Tth}$, with the Kleisli convolution as composition and the identity on $(X,a)$ given by $a\:(X,a)\kmodto (X,a)$. 
\end{proposition}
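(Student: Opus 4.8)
The plan is to treat the two ``ordinary'' categories $\Gph{\Tth}$ and $\Cat{\Tth}$ first, since they are carried by functions and therefore inherit most of the axioms from $\SET$, and then to deal with the locally ordered category $\Mod{\Tth}$ separately, where the lax-unit bookkeeping is the only genuine issue.

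For $\Gph{\Tth}$ and $\Cat{\Tth}$ the morphisms are functions $f\:X\to Y$, so associativity and unitality of composition come straight from $\SET$; the sole point to check is that identities and composites are again morphisms. For $\id_X$, functoriality gives $T\id_X=\id_{TX}$, so the defining inequality $T\id_X\cdot a\le a\cdot\id_X$ collapses to $a\le a$. For a composite $g\circ f$, suppose $Tf\cdot a\le b\cdot f$ and $Tg\cdot b\le c\cdot g$. Using that $T$ is a functor and that composition of (graphs of) functions in $\Mat{\V}$ agrees with their set-theoretic composition, so $T(g\circ f)=Tg\cdot Tf$, together with the monotonicity of matrix composition in the quantaloid $\Mat{\V}$, I would compute
\[
T(g\circ f)\cdot a=Tg\cdot Tf\cdot a\le Tg\cdot b\cdot f\le c\cdot g\cdot f=c\cdot(g\circ f).
\]
These computations never use whether the objects are $\Tth$-graphs or $\Tth$-categories, so both categories are obtained at once.

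For $\Mod{\Tth}$, I would first record that the Kleisli convolution is monotone in each variable: built as $m\cdot\Txi(-)\cdot(-)$ from matrix composition (monotone, since $\Mat{\V}$ is a quantaloid) and from the $2$-functor $\Txi$ (hence monotone), it preserves the order inherited on parallel $\Tth$-matrices, which is exactly what ``locally ordered'' requires; associativity is already supplied in the excerpt. That the identity on $(X,a)$ is a distributor $a\:(X,a)\kmodto(X,a)$ is immediate, since both distributor inequalities reduce to $a\circ a\le a$, valid because $(X,a)$ is a $\Tth$-category. Closure under composition follows from associativity alone: for $\varphi\:(X,a)\kmodto(Y,b)$ and $\psi\:(Y,b)\kmodto(Z,c)$ one has $(\psi\circ\varphi)\circ a=\psi\circ(\varphi\circ a)\le\psi\circ\varphi$ and $c\circ(\psi\circ\varphi)=(c\circ\psi)\circ\varphi\le\psi\circ\varphi$.

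The only real (if small) work is the unit law, and here the asymmetry of the lax identity $e$ must be handled carefully; this is the step I expect to be the main obstacle. For a distributor $\varphi\:(X,a)\kmodto(Y,b)$ I would argue on each side by combining the reflexivity condition on objects with the unit laws for $e$. On the left, the strict law $e_Y\circ\varphi=\varphi$, together with $e_Y\le b$ and the distributor inequality, gives $\varphi=e_Y\circ\varphi\le b\circ\varphi\le\varphi$, hence $b\circ\varphi=\varphi$. On the right, where only the lax law $\varphi\circ e_X\ge\varphi$ is available, I would invoke the $\Tth$-graph condition $e_X\le a$ and monotonicity to squeeze $\varphi\le\varphi\circ e_X\le\varphi\circ a\le\varphi$, hence $\varphi\circ a=\varphi$. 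Thus $a$ is a two-sided identity. The subtlety to watch is precisely this right-hand side: the lax unit by itself would not yield $\varphi\circ a=\varphi$, and it is the object-level reflexivity $e_X\le a$ that restores the equality; everything else is formal monotonicity and associativity.
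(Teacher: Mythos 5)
Your proof is correct and is exactly the routine verification the paper omits (the proposition is stated without proof): it uses precisely the ingredients the paper records immediately beforehand---functoriality of $T$ and monotonicity of composition in $\Mat{\V}$ for $\Gph{\Tth}$ and $\Cat{\Tth}$; 2-functoriality of $\Txi$, associativity of Kleisli convolution, the strict law $e_Y\kleisli\varphi=\varphi$ and the lax law $\varphi\kleisli e_X\ge\varphi$ for $\Mod{\Tth}$. In particular, your treatment of the asymmetric unit laws---strict on one side, and on the other side the lax law combined with reflexivity $e_X\le a$ and the distributor inequality to force equality---is the intended argument.
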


\begin{examples}\label{ExCats} We come back to the theories of Example \ref{ExTheories}:
\begin{enumerate}
\item Trivial theory: For each quantale $\V$, $\Ith_\V$-categories are precisely $\V$-categories and $\Ith_\V$-functors are $\V$-functors. As usual, we write $\Cat{\V}$ instead of $\Cat{\Ith_\V}$, $\Gph{\V}$ instead of $\Gph{\Ith_\V}$, and so on. In particular, $\Cat{\V}$ is the category $\ORD$ of ordered sets if $\V=\two$, and for $\V=[0,\infty]$ one obtains Lawvere's category $\MET$ of generalised metric spaces \cite{Law_Metric}.
\item Ultrafilter theories: The main result of \cite{Bar_RelAlg} states that $\Cat{\Uth_\two}$ is isomorphic to the category $\TOP$ of topological spaces. In \cite{CH_TopFeat} it is shown that $\Cat{\Uth_{[0,\infty]}}$ is isomorphic to the category $\AP$ of approach spaces \cite{Low_ApBook}.
\end{enumerate}
\end{examples}

Since we always have $\varphi\kleisli a\ge\varphi$ and $b\kleisli\varphi\ge\varphi$, the $\Tth$-distributor condition above implies equality. The local order in $\Mod{\Tth}$ is inherited from $\Mat{\V}$, but whereas the latter is a quantaloid (i.e.\ has local suprema which are stable under composition), the former generally is not. In fact, the matrix-infimum of distributors is a distributor, but the matrix-supremum of distributors is not necessarily a distributor. It is easy to see that all liftings (i.e.\ right adjoints to $\psi\kleisli-$) exist in $\Mod{\Tth}$, but the example below shows that extensions (i.e.\ right adjoints to $-\kleisli\psi$) needn't exist.

\begin{lemma}
For any $\psi\:(Y,b)\kmodto (X,a)$ and $(Z,c)$ in $\Mod{\Tth}$\footnote{In fact, this proof also works in the locally ordered category $\UMat{\Tth}$ of so-called \emph{unitary $\Tth$-relations}: its objects are sets and its arrows are those $a\:X\krelto Y$ for which $a\kleisli e_X=a$ holds. Kleisli convolution is composition, and the identity on $X$ is $e_X$.}, the order-preserving map 
\[\psi\kleisli-\:\Mod{\Tth}((Z,c),(Y,b))\to\Mod{\Tth}((Z,c),(X,a))\]
admits a right adjoint.
\end{lemma}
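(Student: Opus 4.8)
The plan is to obtain the right adjoint from the biclosedness of $\Mat{\V}$ and then verify that it restricts to distributors. First I observe that, on the level of underlying $\V$-matrices, the map $\psi\kleisli-$ is nothing but left composition with the fixed matrix $B:=m_X\cdot\Txi\psi\:TY\relto TX$, since $\psi\kleisli\chi=m_X\cdot\Txi\psi\cdot\chi=B\cdot\chi$ for every $\Tth$-matrix $\chi\:Z\krelto Y$. As $\Mat{\V}$ is a quantaloid, hence biclosed, the order-preserving map $B\cdot-$ admits a right adjoint $B\homcompleft-$; I therefore set
\[\chi^*:=B\homcompleft\theta\]
for a given distributor $\theta\:(Z,c)\kmodto(X,a)$, so that $B\cdot p\le\theta\iff p\le\chi^*$ holds for \emph{every} $\V$-matrix $p\:Z\krelto Y$.

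The crux of the proof is to check that $\chi^*$ is in fact a distributor $(Z,c)\kmodto(Y,b)$, i.e.\ that $\chi^*\kleisli c\le\chi^*$ and $b\kleisli\chi^*\le\chi^*$ (the reverse inequalities holding automatically since $e_Z\le c$ and $e_Y\le b$). Both follow formally from the adjunction above together with associativity and monotonicity of the Kleisli convolution. For the first, it suffices by the adjunction to show $\psi\kleisli(\chi^*\kleisli c)\le\theta$; using associativity and the counit $\psi\kleisli\chi^*=B\cdot\chi^*\le\theta$, I get $\psi\kleisli(\chi^*\kleisli c)=(\psi\kleisli\chi^*)\kleisli c\le\theta\kleisli c\le\theta$, where the last step uses that $\theta$ is a distributor. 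For the second, it suffices to show $\psi\kleisli(b\kleisli\chi^*)\le\theta$; associativity and the module condition $\psi\kleisli b\le\psi$ on $\psi$ give $\psi\kleisli(b\kleisli\chi^*)=(\psi\kleisli b)\kleisli\chi^*\le\psi\kleisli\chi^*\le\theta$.

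It then remains to record that $\chi^*$ furnishes the desired right adjoint. Since the local order of $\Mod{\Tth}$ is inherited from $\Mat{\V}$, for distributors $\chi$ and $\theta$ the matrix-level equivalence restricts to
\[\psi\kleisli\chi\le\theta\iff\chi\le\chi^*,\]
and $\theta\mapsto\chi^*=B\homcompleft\theta$ is order-preserving, which is exactly the asserted adjunction. I expect the only genuinely non-routine point to be the verification that $\chi^*$ lands among the distributors; the pleasant feature here is that no correction by $b$ or $c$ is needed, as the two module conditions drop out directly from associativity and the distributor conditions on $\psi$ and $\theta$.
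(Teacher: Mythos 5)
Your proof is correct and takes essentially the same approach as the paper: the paper also obtains the right adjoint from biclosedness of $\Mat{\V}$, setting $\psi\homkleislileft\gamma:=(m_X\cdot\Txi\psi)\homcompleft\gamma$, and merely asserts that the distributor conditions and the universal property are "easily verified". Your write-up supplies exactly those omitted verifications, correctly, via associativity of Kleisli convolution and the module conditions on $\psi$ and $\theta$.
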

\begin{proof}
For any $\gamma\:(Z,c)\kmodto (X,a)$ we pass from
\begin{align*}
\xymatrix{X & Z\ar@{-_{>}}|-{\object@{o}}[l]_\gamma\\ Y\ar@{-^{>}}|-{\object@{o}}[u]^\psi} &&\text{to}&&
\xymatrix{TX & Z\ar[l]|-{\object@{|}}_\gamma\\
TTX\ar[u]|-{\object@{|}}^{m_X}\\ TY\ar[u]|-{\object@{|}}^{\Txi\psi}}
\end{align*}
and put $\psi\homkleislileft\gamma:=(m_X\cdot\Txi\psi)\homcompleft\gamma$: it is easily verified that $\psi\homkleislileft\gamma$ is a $\Tth$-distributor and satisfies the required universal property.
\end{proof}
\begin{example}
Consider the real numbers with their Euclidian topology, $\R_E$, and with the discrete topology, $\R_D$. Then certainly $f\:\R_D\to\R_E,\;x\mapsto x$ is continuous. Further one checks that a distributor $\theta\:R_E\kmodto E$, resp.\ $\kappa\:R_D\kmodto E$, is ``the same as'' a closed subset of $\R$ 
for the respective topologies, where $E$ denotes a one-element space. Finally, one finds that $\theta\kleisli f_*=\theta$ for any $\theta\:R_E\kmodto E$. Because the supremum of closed subsets in $\R_E$ is in general different 
from their supremum in $\R_D$ (i.e.\ their union), we now find that $-\kleisli f_*$ does not necessarily preserve such suprema.
\end{example}

We shall now establish the expected relation between $\Tth$-functors and $\Tth$-distributors: each $\Tth$-functor induces an adjoint pair of $\Tth$-distributors (see \cite{CH_Compl}).
 
Let $X=(X,a)$ and $Y=(Y,b)$ be $\Tth$-categories and $f\:X\to Y$ be a $\Tth$-functor. We define $\Tth$-distributors $f_*\:X\kmodto Y$ and $f^*\:Y\kmodto X$ by putting $f_*=b\cdot f$ and $f^*=Tf^\circ\cdot b$ respectively. Hence, for $\frx\in TX$, $\fry\in TY$, $x\in X$ and $y\in Y$, $f_*(\fry,x)=b(\fry,f(x))$ and $f^*(\frx,y)=b(Tf(\frx),y)$. One easily verifies the rules
\begin{align*}
f^*\kleisli\varphi&=Tf^\circ\varphi &&\text{and}&& \psi\kleisli f_*=\psi\cdot f,
\end{align*}
for $\Tth$-distributors $\varphi$ and $\psi$, to conclude that $f_*\dashv f^*$ in $\Mod{\Tth}$. One calls a $\Tth$-category $X$ \emph{Cauchy-complete} (Lawvere complete in \cite{CH_Compl}) if every adjunction $\varphi\dashv\psi$ of $\Tth$-distributors $\varphi\:Y\kmodto X$ and $\psi\:X\kmodto Y$ is of the form $f_*\dashv f^*$, for some $\Tth$-functor $f\:Y\to X$. As shown in \cite{CH_Compl}, in order to check if $X$ is Cauchy-complete it is enough to consider the case $Y=(1,k_!)$ where $k_!:=!^\circ\cdot k\:1\relto T1$.

Furthermore, we have functors
\[\xymatrix@=8ex{
\Cat{\Tth}\ar[r]^{(-)_*} & \Mod{\Tth} & \Cat{\Tth}^\op\ar[l]_{(-)^*}},
\]
where $X_*=X=X^*$ for each $\Tth$-category $X=(X,a)$. Hence, $\Cat{\Tth}$ becomes a 2-category via the functor $(-)_*$: we define $f\le g$ if $f_*\le g_*$, which is equivalent to $g^*\le f^*$. Taking this 2-categorical structure into account, the second functor above can be written as $(-)^*\:\Cat{\Tth}^{\co\op}\to\Mod{\Tth}$.

Let us point out some other 2-functors that are of interest (cf.\ the diagram in figure \ref{diag1}):
\begin{itemize}
\item The forgetful $U\:\Cat{\Tth}\hookrightarrow\Gph{\Tth}$ has a left adjoint $F$ which is for instance described in \cite{Hof_Quot}.
\item Each $\Tth$-category $(X,a)$ has an underlying $\V$-category $\ForgetToV(X,a)=(X,e_X^\circ\cdot a)$. This defines a functor $\ForgetToV\:\Cat{\Tth}\to\Cat{\V}$ which has a left adjoint $\ForgetToVAd\:\Cat{\V}\to\Cat{\Tth}$ defined by $\ForgetToVAd(X,r)=(X,\Txi r\cdot e_X)$. 
\item As observed in \cite{CH_Compl}, there is another functor from $\Cat{\Tth}$ to $\Cat{\V}$, namely $\MFunctor\:\Cat{\Tth}\to\Cat{\V}$ which sends a $\Tth$-category $(X,a)$ to the $\V$-category $(TX,m_X\cdot \Txi a)$. This functor shall only be needed to define the dual of a $\Tth$-category (see further) and is not pictured in the diagram.
\item Each Eilenberg--Moore $\mT$-algebra $(X,\alpha)$ can be considered as a $\Tth$-category by regarding the function $\alpha\:TX\to X$ as a $\V$-matrix $\alpha^\circ\:X\relto TX$. This defines a functor $\Discrete\:\SET^\mT\to\Cat{\Tth}$, whose composition with $\SET\to\SET^\mT$ we denote as $|-|\:\SET\to\Cat{\Tth}$.
\end{itemize}
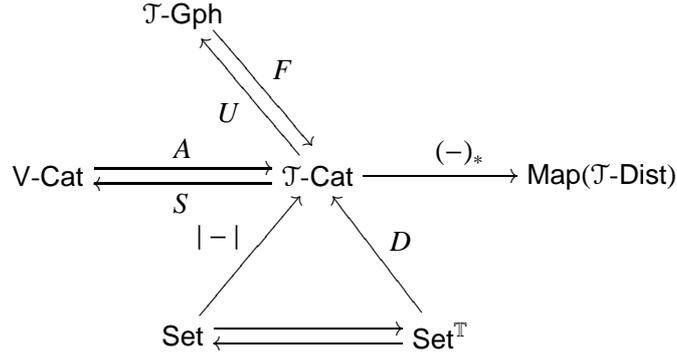
\begin{figure}
$$\xymatrix@R=15mm@C=5mm{
 & \Gph{\Tth}\ar@<2mm>[dr]^F \\
\Cat{\V}\ar@<1mm>[rr]^A & & \Cat{\Tth}\ar[rr]^{(-)_*}\ar[ul]^U\ar@<1mm>[ll]^S & & \Map(\Mod{\Tth}) \\
 & \SET\ar[ur]^{|-|}\ar@<1mm>[rr] & & \SET^\mT\ar@<1mm>[ll]\ar[ul]_D}$$
\caption{Some (2-)functors of interest}\label{diag1}
\end{figure}
We shall now discuss some further properties of these functors, especially concerning monoidal structure.

The tensor product $\otimes$ on $\V$ has a canonical lifting to $\Mat{\V}$:one puts $X\otimes Y=X\times Y$, and for $\V$-matrices $a\:X\to Y$ and $b\:X'\to Y'$ one defines
\[
(a\otimes b)((x,x'),(y,y'))=a(x,y)\otimes b(y,y').
\]
Clearly, any one element set $1$ is neutral for this tensor product. Then $\Txi\:\Mat{\V}\to\Mat{\V}$ together with the natural transformation $m\:\Txi\Txi\natto\Txi$ and the op-lax natural transformation $\Id\natto\Txi$ becomes a \emph{lax Hopf monad} on $(\Mat{\V},\otimes,1)$ in the sense that we have maps $\tau_{X,Y}\:T(X\times Y)\to TX\times TY$ and $!\:T1\to 1$ so that the diagrams
\begin{align*}
\xymatrix{T(X\otimes Y)\ar[d]|-{\object@{|}}_{\Txi(r\otimes s)}\ar[r]^{\tau_{X,Y}} & TX\otimes TY\ar[d]|-{\object@{|}}^{\Txi r\otimes \Txi s}\\
T(X'\otimes Y')\ar[r]_{\tau_{X',Y'}} & TX'\otimes TY'}
&&\text{and}&&
\xymatrix{T1\ar[r]^{!}\ar[d]|-{\object@{|}}_{\Txi k} & 1\ar[d]|-{\object@{|}}^k\\ T1\ar[r]_{!} & 1}
\end{align*}
commute in $\Mat{\V}$. Hence, we cannot speak of a Hopf monad (see \cite{Moe_Tensor}) only because $(\Txi,m,e)$ is just a lax monad on $\Mat{\V}$. This additional structure permits us to turn also $\Cat{\Tth}$ into a tensored category: for $\Tth$-categories (or, more general, $\Tth$-graphs) $X=(X,a)$ and $Y=(Y,b)$ we define $X\otimes Y=(X\times Y,c)$ where $c=\tau_{X,Y}^\circ\cdot(a\otimes b)$. Explicitly, for $\frw\in T(X\times Y)$, $x\in X$, $y\in Y$, $\frx=T\pi_1(\frw)$ and $\fry=T\pi_2(\frw)$ we have
\[
c(\frw,(x,y))=a(\frx,x)\otimes b(\fry,y).
\]
The $\Tth$-category $E=(1,k_!)$, with $k_!\:=!^\circ\cdot k\:1\relto T1$, is neutral for $\otimes$. This tensor product and its properties are studied in \cite{Hof_TopTh} (unfortunately, without mentioning the concept of a Hopf monad). The functors introduced above have now the following properties: $\ForgetToVAd\:\Cat{\V}\to\Cat{\Tth}$ and $\MFunctor\:\Cat{\V}\to\Cat{\Tth}$ are op-monoidal 2-functors, $\ForgetToV\:\Cat{\Tth}\to\Cat{\V}$ is a strong monoidal 2-functor and $\Discrete\:\SET^\mT\to\Cat{\Tth}$ is a strong monoidal functor.

Another important feature of a topological theory is that it allows us to consider $\V$ as a $\Tth$-category $\V=(\V,\hom_\xi)$, where
\[
\hom_\xi\:T\V\times\V\to\V,\;\;(\frv,v) \mapsto \hom(\xi(\frv),v).
\]
Note that $\V=(\V,\hom_\xi)$ is in general not isomorphic to $\ForgetToVAd(\V,\hom)$. Furthermore, $\V$ is a monoid in $(\Cat{\Tth},\otimes,E)$ since both $k\:E\to\V$ and $\otimes\:\V\otimes\V\to\V$ are $\Tth$-functors. Through the strong monoidal 2-functor $\ForgetToV\:\Cat{\Tth}\to\Cat{\V}$, this specialises to the usual monoid structure on $\V$ in $\Cat{\V}$. We also remark that $\xi\:|\V|\to\V$ becomes now a $\Tth$-functor.

We finish this section by presenting a characterisation of $\Tth$-distributors as $\V$-valued $\Tth$-functors, generalising therefore a well-known fact about $\V$-categories. This result involves the dual category, a concept which has no obvious $\Tth$-counterpart. However, the following definition (see \cite{CH_Compl}) proved to be useful: given a $\Tth$-category $X=(X,a)$, one puts
\[X^\op=\ForgetToVAd(\MFunctor(X)^\op).\]
\begin{theorem}[\cite{CH_Compl}]\label{CharTMod}
For $\Tth$-categories $(X,a)$ and $(Y,b)$, and a $\Tth$-matrix $\psi\:X\krelto Y$, the following assertions are equivalent.
\begin{eqcond}
\item $\psi\:(X,a)\kmodto(Y,b)$ is a $\Tth$-distributor.
\item Both $\psi\:|Y|\otimes X\to\V$ and $\psi\:Y^\op\otimes X\to\V$ are $\Tth$-functors.
\end{eqcond}
\end{theorem}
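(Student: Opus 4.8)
The plan is to reduce both clauses of (ii) to a single explicit description of the $\Tth$-functors with codomain $\V=(\V,\hom_\xi)$, and then to read the two defining inequalities of a $\Tth$-distributor off the two tensor structures carried by $TY\times X$, the common underlying set of $|Y|\otimes X$ and $Y^\op\otimes X$.

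First I would record the elementary characterisation: for any $\Tth$-category $(Z,c)$, a function $g\:Z\to\V$ is a $\Tth$-functor $g\:(Z,c)\to\V$ if and only if
\[
(\xi\cdot Tg)(\frW)\otimes c(\frW,z)\le g(z)\qquad\text{for all }\frW\in TZ,\ z\in Z.
\]
This is obtained by writing the defining inequality $Tg\cdot c\le\hom_\xi\cdot g$ in $\Mat{\V}$ elementwise: one has $(\hom_\xi\cdot g)(\frv,z)=\hom(\xi(\frv),g(z))$ and $(Tg\cdot c)(\frv,z)=\bigvee_{\frW:\,Tg(\frW)=\frv}c(\frW,z)$, and the adjunction $x\otimes y\le z\iff x\le\hom(y,z)$ in $\V$ converts the matrix inequality into the displayed pointwise one. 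Since $\psi\:TY\times X\to\V$ is exactly the function underlying the candidate $\Tth$-functor in both clauses of (ii), it remains only to compute the two structures on $TY\times X$ and to match the resulting criterion with $\psi\kleisli a\le\psi$ and $b\kleisli\psi\le\psi$.

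Using the tensor formula $c(\frw,(x,y))=a(T\pi_1(\frw),x)\otimes b(T\pi_2(\frw),y)$ of $\Cat{\Tth}$, the structure of $|Y|\otimes X$ is $c_1(\frW,(\frp,x))=m_Y^\circ(T\pi_1(\frW),\frp)\otimes a(T\pi_2(\frW),x)$, equal to $a(T\pi_2(\frW),x)$ when $\frp=m_Y(T\pi_1(\frW))$ and $\bot$ otherwise, while that of $Y^\op\otimes X$ is $c_2(\frW,(\frp,x))=b^\op(T\pi_1(\frW),\frp)\otimes a(T\pi_2(\frW),x)$, where $b^\op$ is the structure of $Y^\op=\ForgetToVAd(\MFunctor(Y)^\op)$. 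Feeding $c_1$ into the characterisation, the nontrivial constraints occur only at $\frp=m_Y(T\pi_1(\frW))$ and read $(\xi\cdot T\psi)(\frW)\otimes a(T\pi_2(\frW),x)\le\psi(m_Y(T\pi_1(\frW)),x)$. Expanding $\psi\kleisli a=m_Y\cdot\Txi\psi\cdot a$ elementwise---writing $\Txi\psi$ as the supremum of $\xi\cdot T\psi$ over the fibres of $(T\pi_1,T\pi_2)$ and absorbing the suprema over $m_Y$---produces precisely this family of inequalities. Hence the first clause of (ii) is equivalent to $\psi\kleisli a\le\psi$.

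The remaining, and substantially harder, step is to identify the second clause with $b\kleisli\psi\le\psi$. Here I would expand $b^\op$ explicitly: from $\MFunctor(Y)=(TY,m_Y\cdot\Txi b)$ and the definition of $\ForgetToVAd$ one obtains $b^\op=\Txi\bigl((m_Y\cdot\Txi b)^\circ\bigr)\cdot e_{TY}$, so the criterion for $\psi\:Y^\op\otimes X\to\V$ becomes an inequality involving $\xi\cdot T\psi$, this ``doubly extended'' copy of $b$, and $a$, which must be shown to coincide with the elementwise form of $b\kleisli\psi=m_Y\cdot\Txi b\cdot\psi\le\psi$. This is exactly where the defining axioms of a strict topological theory enter: the B\'enabou--Beck--Chevalley condition (the naturality squares of $m$ are weak pullbacks) is needed to collapse the iterated $T$'s and $m$'s carried by $b^\op$ into the single Kleisli composite, and the naturality of $\xi$ together with the lax compatibility of $\xi$ with $\hom$ recorded after the definition is needed to move $\xi$ past the relevant $T$'s. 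I expect this reconciliation to be the main obstacle; the reflexivity $e_X\le a$ also intervenes, ensuring that the $a$-factor appearing in $c_2$ contributes nothing beyond the condition already obtained from the first clause, so that the two functor conditions hold simultaneously exactly when both $\psi\kleisli a\le\psi$ and $b\kleisli\psi\le\psi$ do. By the definition of $\Tth$-distributor this yields the equivalence of (i) and (ii).
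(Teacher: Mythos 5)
First, a point of reference: the paper does not prove Theorem \ref{CharTMod} at all --- it is imported from \cite{CH_Compl} --- so your proposal has to stand on its own. Its first half does stand: the pointwise characterisation of $\Tth$-functors into $(\V,\hom_\xi)$ is correct, the structure $c_1$ of $|Y|\otimes X$ is computed correctly, and the resulting equivalence of the first clause of (ii) with $\psi\kleisli a\le\psi$ is a complete argument. The problem is the second half, which is where the substance of the theorem lies and where you stop at a plan: you set up $b^\op=\Txi\bigl((m_Y\cdot\Txi b)^\circ\bigr)\cdot e_{TY}$ correctly and then merely announce that reconciling the resulting inequality with $b\kleisli\psi\le\psi$ ``is the main obstacle''. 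That reconciliation \emph{is} the theorem; naming the tools (Beck--Chevalley, naturality) is not carrying it out. Moreover, the target you announce is false as stated: the clause ``$\psi\:Y^\op\otimes X\to\V$ is a $\Tth$-functor'' is \emph{not} equivalent to $b\kleisli\psi\le\psi$ alone. For the ultrafilter theory with $\V=\two$, take $Y=E$ the one-point space: then $Y^\op\otimes X\cong X$, so the second clause says that $\psi$ (a subset of $X$) is closed, which is exactly the right-action condition $\psi\kleisli a\le\psi$, while $b\kleisli\psi\le\psi$ holds vacuously; any non-closed subset separates the two. So a clause-by-clause identification cannot succeed. The correct bookkeeping is: the second clause implies the left-action inequality, but recovering the second clause requires \emph{both} distributor inequalities.

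Concretely, in the hard direction (i)$\Rightarrow$(ii) it is not reflexivity of $a$ (as you suggest) that absorbs the $a$-factor in $c_2$, but the right action: from $\xi\cdot T\psi(\frW)\le\Txi\psi(T\pi_1(\frW),T\pi_2(\frW))$ and $\psi\kleisli a\le\psi$ one gets $\xi\cdot T\psi(\frW)\otimes a(T\pi_2(\frW),x)\le\psi(m_Y(T\pi_1(\frW)),x)$, after which it remains to prove $b^\op(\frY,\frp)\otimes\psi(m_Y(\frY),x)\le\psi(\frp,x)$ for all $\frY\in TTY$, $\frp\in TY$, $x\in X$. For this, write $b^\op=(\Txi\Txi b)^\circ\cdot(Tm_Y)^\circ\cdot e_{TY}$ (functoriality of $\Txi$, available by Beck--Chevalley, together with $\Txi(r^\circ)=\Txi(r)^\circ$ and $\Txi(f)=Tf$ on maps); then for $\frZ\in TTTY$ with $Tm_Y(\frZ)=e_{TY}(\frp)$, strict naturality of $m\:\Txi\Txi\natto\Txi$ gives $\Txi\Txi b(\frZ,\frY)\le\Txi b(m_{TY}(\frZ),m_Y(\frY))$, and the monad laws give $m_Y(m_{TY}(\frZ))=m_Y(Tm_Y(\frZ))=m_Y(e_{TY}(\frp))=\frp$, so that $b\kleisli\psi\le\psi$ finishes. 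Conversely, extracting $b\kleisli\psi\le\psi$ from the second clause uses $\frW=e_{TY\times X}(\fry,x)$, the algebra law $\xi\cdot e_\V=\id$, reflexivity $e_X\le a$, and the oplax naturality of $e$ to bound $b^\op(e_{TY}(\fry),\frp)$ from below by $\bigvee\{\Txi b(\frq,\fry)\mid\frq\in TTY,\ m_Y(\frq)=\frp\}$. Until these computations are written out, and the clause-by-clause claim corrected, your text is an outline rather than a proof.
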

In particular, since $a\:X\kmodto X$ for each $\Tth$-category $X=(X,a)$, we have two $\Tth$-functors
\begin{align*}
a\:|X|\otimes X\to\V &&\text{and}&& a\:X^\op\otimes X\to\V.
\end{align*}
The theorem above, together with the condition $T1=1$, can now be used to construct the Cauchy-completion $\yoneda\:X\to\tilde{X}$ of a $\Tth$-category $X$, where $\tilde{X}$ has as objects
\begin{align*}
\{\psi\in\Mod{\Tth}(E,X)\mid\psi\text{ is left adjoint}\}
\end{align*}
and $\yoneda$ is the Yoneda embedding $x\mapsto x_*$. For details we refer to \cite{HT_LCls}.
\begin{examples}
We consider first $\V=[0,\infty]$, hence $\V$-category means (generalised) metric space. In \cite{Law_Metric}  F.W.~Lawvere has shown that equivalence classes of Cauchy sequences in a metric space $X$ correspond precisely to left adjoint $[0,\infty]$-distributors $\psi\:E\modto X$, and a Cauchy sequence converges to $x$ if and only if $x$ is a colimit of the corresponding $[0,\infty]$-distributor. Hence, Cauchy completeness has the usual meaning and $\tilde{X}$ describes the usual Cauchy completion of a metric space.

In \cite{HT_LCls} it is shown that the topological space (= $\Uth_\two$-category) $\tilde{X}$ is homeomorphic to the space of all completely prime filters on the lattice $\tau$ of open subsets of $X$, and $\yoneda\:X\to\tilde{X}$ corresponds to the map which sends $x\in X$ to its neighbourhood filter. Of course, one can equivalently consider right adjoint $\Uth_\two$-distributors $\varphi\:X\kmodto E$, and in \cite{CH_Compl} it is shown that a $\Uth_\two$-distributors $\varphi\:X\kmodto E$ is right adjoint if and only if $\varphi\:X\to\two$ is the characteristic map of an irreducible closed subset $A$ of $X$, and $\varphi=x^*$ if and only $A=\overline{\{x\}}$. Hence, a topological space $X$ is Cauchy complete if and only if $X$ is weakly sober. 
\end{examples}

\section{$\Tth$-categories versus $\Tth$-frames}\label{Sec2}

Recall from the Introduction that $\Omega\:\TOP^\op\to\FRM$ is the functor that sends any topological space $X$ to the frame $\Omega(X)$ of its open subsets, and any continuous function $f\:X\to Y$ to the frame homomorphism $\Omega(f)\:\Omega(Y)\to\Omega(X)$ given by inverse image. It is straightforward to see that $\Omega(X)$ is isomorphic (qua ordered set) to $\TOP(X,S)$, where $S$ is the Sierpinski space, topological spaces are considered with their specialisation order (which continuous functions preserve), and $\TOP(X,S)$ is ordered pointwise. In fact, modulo these isomorphisms, $\Omega\:\TOP^\op\to\FRM$ is simply a corestriction of the representable functor $\TOP(-,S)\:\TOP^\op\to\ORD$. Further recall that the left adjoint to $\Omega$, $\pt\:\FRM\to\TOP^\op$, is also defined by means of a representable, namely $\FRM(-,\two)$. It is now noteworthy that the specialisation order of the Sierpinski space $S$ is precisely the two-element chain $\two=\{0\leq 1\}$, and conversely $S$ is the Alexandrov topology on $\two$. For this reason, some have called the two-point set $\{0,1\}$ a {\em dualising object} in this situation: it can be endowed with two different structures, the Sierpinski topology and the total order, making it objects of two different categories, topological spaces and frames, and represents a duality between these categories \cite{PT_Dual}.

This analysis now suggests our method to define the category of ``$\Tth$-frames'' in the general context of $\Tth$-categories, as follows. For any strict topological theory $\Tth=(\mT,\V,\xi)$, the quantale $\V$ naturally bears the structure of a $\Tth$-category; thus we have the representable functor $\Cat{\Tth}(-,\V)\:\Cat{\Tth}^\op\to\Cat{\V}$. Now we devise a category $\Frm{\Tth}$ of ``$\Tth$-frames'' and ``$\Tth$-frame homomorphisms'' in such a way\footnote{At this point we should mention that, specialised to the topological case, $\V=\two$ becomes the Sierpinski space with $\{1\}$ closed so that $\TOP(X,\two)$ is naturally isomorphic to the \emph{co-frame} of closed subsets of $X$. Nevertheless, we prefer to use the term ``$\Tth$-frame'' in the sequel.} that (i) the representable $\Cat{\Tth}(-,\V)\:\Cat{\Tth}^\op\to\Cat{\V}$ corestricts to a functor $\Omega\:\Cat{\Tth}^\op\to\Frm{\Tth}$, and (ii) $\V$ is an object of $\Frm{\Tth}$ representing a functor $\pt\:\Frm{\Tth}\to\Cat{\Tth}^\op$. Ideally, these functors should then be adjoint, but for now we are unable to prove this. However, we do show a natural comparison $\eta_X\:X\to\pt(\Omega(X))$ for any $\Tth$-category $X$, and we prove that $X$ is a {\em Cauchy complete} $\Tth$-category (amounting to {\em sobriety} in the case of $T_0$ topological spaces) if and only if $\eta_X$ is surjective.

For technical reasons {\bf we shall from now on assume that $T1=1$}. Together with Theorem \ref{CharTMod} this implies that a $\Tth$-distributor $\varphi\:X\kmodto E$ is ``the same thing as'' a $\Tth$-functor $\varphi\:X\to\V$ (recall that $E$ is the unit for the tensor product in $\Cat{\Tth}$). Furthermore, for any $\alpha\:X\krelto E$,
\[
-\kleisli\alpha\:\Mat{\Tth}(E,E)\to\Mat{\Tth}(X,E)
\]
has a right adjoint $(-)\homkleisliright\alpha$ calculated as in $\Mat{\V}$, due to $\Txi v=v$ (for any $v\:1\relto 1$). Unfortunately, the condition $T1=1$ excludes Example \ref{ExTheories} \eqref{WordTh}. 
\begin{lemma}
The following assertions hold.
\begin{enumerate}
\item $\bigwedge\:\V^I\to\V$ is a $\Tth$-functor, for each index set $I$.
\item $\hom(v,-)\:\V\to\V$ is a $\Tth$-functor, for each $v\in\V$.
\item $v\otimes-\:\V\to\V$ is a $\Tth$-functor, for each $v\in\V$.
\end{enumerate}
\end{lemma}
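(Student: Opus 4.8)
The plan is to reduce all three claims to the single, explicit description of what it means for a function with codomain $\V=(\V,\hom_\xi)$ to be a $\Tth$-functor. For $f\colon(X,a)\to\V$, the condition $Tf\cdot a\le\hom_\xi\cdot f$ reads pointwise as $a(\frx,x)\le\hom(\xi(Tf(\frx)),f(x))$ for all $\frx\in TX$, $x\in X$, which, by the defining adjunction $u\otimes w\le z\iff u\le\hom(w,z)$ of the internal hom, is equivalent to
\[
a(\frx,x)\otimes\xi(Tf(\frx))\le f(x).\qquad(\star)
\]
Thus in each case it suffices to control the term $\xi(Tf(\frx))$, and for this I will use three facts from Section~1. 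First, condition (c) says exactly that $\phi\mapsto\xi\cdot T\phi\colon\V^X\to\V^{TX}$ is an $\ORD$-morphism, hence order-preserving: $\phi\le\psi$ pointwise implies $\xi(T\phi(\frx))\le\xi(T\psi(\frx))$ for every $\frx$. Second, since $\otimes$ is a strict $\mT$-homomorphism, $\xi(T(\otimes)(\frw))=\xi(T\pi_1(\frw))\otimes\xi(T\pi_2(\frw))$ for all $\frw\in T(\V\times\V)$. Third, for the constant map $\bar v\colon\V\to\V$ at $v$, the standing hypothesis $T1=1$ together with the unit law $\xi\cdot e_\V=\id_\V$ forces $\xi(T\bar v(\frv))=v$ for all $\frv\in T\V$: indeed $\bar v$ factors through the singleton, and $Tv\colon T1=1\to T\V$ sends the unique point to $e_\V(v)$ by naturality of $e$.

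Granting these, claim (3) is shortest. Writing $v\otimes-=\otimes\circ\langle\bar v,\id_\V\rangle$ and applying strictness of $\otimes$ gives, exactly, $\xi(T(v\otimes-)(\frv))=v\otimes\xi(\frv)$. Substituting into $(\star)$ (with $a=\hom_\xi$) and using the evaluation $\hom(\xi(\frv),w)\otimes\xi(\frv)\le w$ yields $\hom(\xi(\frv),w)\otimes v\otimes\xi(\frv)\le v\otimes w$, which is $(\star)$ for $f=v\otimes-$. For claim (2), I first derive the bound $\xi(T(\hom(v,-))(\frv))\le\hom(v,\xi(\frv))$: the map $w\mapsto\hom(v,w)\otimes v$ lies below $\id_\V$ by evaluation, so monotonicity of $\xi\cdot T(-)$ and strictness of $\otimes$ give $\xi(T(\hom(v,-))(\frv))\otimes v=\xi\bigl(T(\otimes\circ\langle\hom(v,-),\bar v\rangle)(\frv)\bigr)\le\xi(\frv)$, and transposing gives the bound. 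Feeding it into $(\star)$ and applying evaluation twice establishes that $\hom(v,-)$ is a $\Tth$-functor.

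For claim (1), I must first record the $\Tth$-category structure on the power $\V^I$. As $\Cat{\Tth}$ is topological over $\SET$, the product carries the initial structure $a(\frw,(v_i)_i)=\bigwedge_i\hom(\xi(Tp_i(\frw)),v_i)$, where $p_i\colon\V^I\to\V$ are the projections. Since $\bigwedge\le p_i$ pointwise for each $i$, monotonicity of $\xi\cdot T(-)$ yields $\xi(T(\bigwedge)(\frw))\le\xi(Tp_i(\frw))$ for every $i$. Then for each fixed $i$, bounding the infimum $a(\frw,(v_i)_i)$ below by its $i$-th factor and combining this with the last inequality and evaluation shows that the left-hand side of $(\star)$ is $\le v_i$; taking the infimum over $i$ gives $(\star)$ for $f=\bigwedge$.

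I expect the only genuinely delicate point to be the constant-map evaluation $\xi(T\bar v(\frv))=v$, which is precisely where the hypothesis $T1=1$ enters and which underlies both the $\otimes$- and $\hom$-computations above; once it is in place, everything else is a routine application of $(\star)$, the monotonicity of $\xi\cdot T(-)$, and the evaluation inequality for $\hom$.
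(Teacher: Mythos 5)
The paper states this lemma \emph{without proof} (it is presented as a routine consequence of the axioms, right after the standing hypothesis $T1=1$ is introduced), so there is no in-paper argument to compare yours against; judged on its own merits, your proof is correct, and it uses exactly the ingredients the paper assembles for this purpose: the pointwise form $(\star)$ of $\Tth$-functoriality into $(\V,\hom_\xi)$, monotonicity of $\xi_X=\xi\cdot T(-)$ coming from condition (c), the strict lifting of $\otimes$ coming from condition (b), and the evaluation $\xi(T\bar v(\frv))=v$ of constant maps, which is precisely where $T1=1$ enters. Your intermediate bound $\xi(T(\hom(v,-))(\frv))\le\hom(v,\xi(\frv))$ is in fact an instance of the lax $\hom$-square that the paper records from \cite[Lemma 3.2]{Hof_TopTh}, so items (2) and (3) are exactly the intended computations.

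The one point you should tighten is the meaning of $\V^I$ in item (1). In this paper $\V^I$ is not introduced as the cartesian product in $\Cat{\Tth}$ but as the exponential $\Tth$-graph, i.e.\ the value at $\V$ of the right adjoint to $I\otimes-$ on $\Gph{\Tth}$, where $I$ carries the discrete structure $(I,e_I)$; this is the structure in play when the lemma is later applied to $\bigwedge\:\V^{X_D}\to\V$, and for discrete $I$ its structure matrix reduces to $\fspstr{\frp}{\varphi}=\bigwedge\hom(\xi\cdot T\ev(\frq),\varphi(i))$, the infimum taken over all $\frq\in T(I\times\V^I)$ and $i\in I$ with $T\pi_2(\frq)=\frp$ and $T\pi_1(\frq)=e_I(i)$. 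Your argument survives this discrepancy: for each $i$ the element $\frq_i=T\langle i,\id\rangle(\frp)$ satisfies these constraints (again by $T1=1$) and contributes exactly your term $\hom(\xi(Tp_i(\frp)),\varphi(i))$, so the exponential structure lies below your initial (product) structure, and $\Tth$-functoriality with respect to a larger domain structure implies it with respect to any smaller one; indeed, the Beck--Chevalley condition (a) together with $T1=1$ forces every admissible $\frq$ to equal $\frq_i$, so the two structures actually coincide. But as written your proof silently substitutes one structure for the other (and invokes topologicity of $\Cat{\Tth}$ over $\SET$, a fact the paper never states), so this identification, or at least the inequality between the two structures, needs to be made explicit.
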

It is now straightforward that the representable functor $\Cat{\Tth}(-,\V)\:\Cat{\Tth}^\op\to\ORD$ lifts to a functor $\V^-\:\Cat{\Tth}^\op\to\Cont{\V}$ by putting $\V^X$ to be the full sub-$\V$-category of $P(SX)$ (i.e.\ the usual $\V$-category of covariant $\V$-presheaves on $SX$, the ``specialisation'' $\V$-category underlying the $\Tth$-category $X$) determined by the elements in $\Cat{\Tth}(X,\V)$. Clearly, a $\Tth$-functor $f\:X\to Y$ (with $Y$ being a $\Tth$-category) induces a $\V$-functor $\V^f\:\V^Y\to\V^X$ which preserves infima, tensors and cotensors, i.e.\ all weighted limits. Being complete, $\V^Y$ is also cocomplete, but suprema are typically not computed pointwise and hence in general not preserved by $\V^f$. However, a particular class of suprema are preserved by $\V^f$, as we show next. 
\begin{proposition}[\cite{Hof_TopTh}]\label{Compact}
Let $X$ be a $\Tth$-category. Then $X$ is compact if and only if $\bigvee\:\V^X\to\V$ is a $\Tth$-graph morphism. In particular, $\bigvee\:\V^X\to\V$ is a $\Tth$-functor for each $\mT$-algebra $X$.
\end{proposition}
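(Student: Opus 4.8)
The plan is to make both notions fully explicit and compare. First I would fix the two $\Tth$-graph structures involved. The codomain is $\V=(\V,\hom_\xi)$, and on the set $\V^X$ I would use the $\Tth$-graph structure it carries as a full sub-object of the $\Tth$-power $\prod_{x\in X}\V$, namely
\[
\hat a(\Phi,\varphi)=\bigwedge_{x\in X}\hom\bigl(\xi(T\pi_x(\Phi)),\varphi(x)\bigr)\qquad(\Phi\in T(\V^X),\ \varphi\in\V^X),
\]
where $\pi_x\:\V^X\to\V$ is evaluation at $x$; a short computation using $\xi\cdot e_\V=\id$ shows that its underlying $\V$-category is the presheaf $\V$-category named in the statement. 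I would also recall from \cite{Hof_TopTh} that $X=(X,a)$ is \emph{compact} exactly when $k\le\bigvee_{x\in X}a(\frx,x)$ for every $\frx\in TX$ (``every generalised point converges''). Unwinding the defining inequality $T(\bigvee)\cdot\hat a\le\hom_\xi\cdot\bigvee$ of a map into $(\V,\hom_\xi)$, the condition that $\bigvee\:\V^X\to\V$ be a $\Tth$-graph morphism becomes
\[
(\ast)\qquad \hat a(\Phi,\varphi)\otimes\xi(T\bigvee(\Phi))\le\bigvee_{x\in X}\varphi(x)\qquad(\forall\Phi\in T(\V^X),\ \forall\varphi\in\V^X).
\]

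The technical backbone is an observation valid for every $\Phi$. Since $\pi_x\le\bigvee$ pointwise and the component $\xi_{\V^X}$ is order-preserving (being part of the natural transformation $P_{\V}\natto P_{\V}T$ of axiom (c)), I get $\xi(T\pi_x(\Phi))\le\xi(T\bigvee(\Phi))$ for each $x$, hence $\bigvee_x\xi(T\pi_x(\Phi))\le\xi(T\bigvee(\Phi))$ \emph{always}. Conversely, from $\hat a(\Phi,\varphi)\otimes\xi(T\pi_x(\Phi))\le\varphi(x)$ (immediate from the definitions of $\bigwedge$ and $\hom$) and a supremum over $x$, the single inequality
\[
(\diamond)\qquad \xi(T\bigvee(\Phi))\le\bigvee_{x\in X}\xi(T\pi_x(\Phi))\qquad(\forall\Phi\in T(\V^X))
\]
is seen to be \emph{sufficient} for $(\ast)$. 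So the whole question is whether $(\diamond)$ — equivalently, whether $\xi\cdot T(\bigvee)$ and $\bigvee_x(\xi\cdot T\pi_x)$ agree on $T(\V^X)$ — is equivalent to compactness.

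For the direction ``$\bigvee$ a graph morphism $\Rightarrow$ $X$ compact'' I would argue on $(\ast)$ directly, by a representable test. Fix $\frx_0\in TX$; then $\varphi_0:=a(\frx_0,-)\:X\to\V$ is a $\Tth$-functor (a standard consequence of $a\kleisli a\le a$ with the Beck--Chevalley condition), and $\bigvee_x\varphi_0(x)=\bigvee_x a(\frx_0,x)$ is exactly the quantity controlling compactness. Writing $\yoneda\:X\to\V^X$, $y\mapsto a(e_X(y),-)$ for the (contravariant) Yoneda map, set $\Phi_0:=T\yoneda(\frx_0)$. Using $e_X\le a$ one gets $(\bigvee\circ\yoneda)(y)\ge k$, so order-preservation of $\xi\cdot T$ yields $\xi(T\bigvee(\Phi_0))\ge k$; and the $\Tth$-category axiom gives $\xi(T\pi_z(\Phi_0))\le a(\frx_0,z)$, whence $\hat a(\Phi_0,\varphi_0)\ge k$. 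Feeding $\Phi_0,\varphi_0$ into $(\ast)$ then forces $k\le\bigvee_x a(\frx_0,x)$, i.e.\ compactness.

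The hard part is the converse ``$X$ compact $\Rightarrow(\diamond)$''. Here $\Phi$ ranges over \emph{all} of $T(\V^X)$, not merely over Yoneda images, so compactness of $X$ — a statement about convergence of generalised points \emph{of $X$} — must be transported to a statement about the generalised point $\Phi$ \emph{of $\V^X$}. I expect this to be the main obstacle, and would attack it by pushing $\Phi$ down to $X$: combining the identity $\bigvee_{z}g(z)=\bigvee_{\frz\in TZ}\xi(Tg(\frz))$ (a consequence of axiom (c) and $T1=1$ applied to $!_Z\:Z\to1$) with the Beck--Chevalley condition on $T$ and $m$ to rewrite $\xi(T\bigvee(\Phi))$ as a supremum indexed by generalised points $\frx\in TX$, to each of which compactness supplies an honest point $x$ with $a(\frx,x)\ge k$ that absorbs it into $\bigvee_x\xi(T\pi_x(\Phi))$. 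Finally, for the ``in particular'' clause: every $\mT$-algebra $(X,\alpha)$ satisfies $a(\frx,x)=k$ at $x=\alpha(\frx)$, hence $k\le\bigvee_x a(\frx,x)$ and $X$ is compact; and since $\V^X$ and $\V$ are genuine $\Tth$-categories and the defining inequality of a $\Tth$-graph morphism coincides with that of a $\Tth$-functor, $\bigvee$ is then a $\Tth$-functor.
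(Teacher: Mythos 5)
The structure you put on $\V^X$ is not the one the proposition is about, and the difference is fatal, not cosmetic. The paper states explicitly (right after the proposition) that $\V^X$ carries the structure obtained by applying to $\V$ the right adjoint $(-)^X$ of $X\otimes-\:\Gph{\Tth}\to\Gph{\Tth}$, namely
\[
\fspstr{\frp}{\varphi}=\bigwedge_{\frq,\,x}\hom\bigl(a(T\pi_1(\frq),x),\hom(\xi\cdot T\ev(\frq),\varphi(x))\bigr),
\]
where $\frq$ runs over $T(X\times\V^X)$ with $T\pi_2(\frq)=\frp$. Your $\hat a$ is instead the product (initial) structure of $\prod_{x\in X}\V$, obtained from this by keeping only the $\frq$ of the form $T\iota_x(\frp)$ (with $\iota_x(\psi)=(x,\psi)$) and discarding the factor $\hom(a(T\pi_1(\frq),x),-)$ via reflexivity of $a$. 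Consequently $\fspstr{-}{-}\le\hat a$, the two $\Tth$-graphs have the same underlying $\V$-category (which is why your consistency check passes), but they are genuinely different: the term you dropped is exactly what makes generalised points of $\V^X$ interact with the convergence $a$ of $X$, and it is the only place where $a$ enters at all.

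With your structure the proposition is false, so the direction ``compact $\Rightarrow(\diamond)$'' --- which you anyway only sketch --- cannot be completed. Take $\Tth=\Uth_\two$, $X=[0,1]$ (compact), so $\V^X$ is the set of closed subsets of $[0,1]$, and let $\Phi$ be a free ultrafilter on $\V^X$ containing $\calS=\{\{1/n\}\mid n\ge1\}$. For every $x$, the set $\{A\mid x\in A\}$ meets $\calS$ in at most one element, hence is not in $\Phi$; so $\xi\cdot U\pi_x(\Phi)=0$ for all $x$, giving $\hat a(\Phi,\varnothing)=\top$: in your structure $\Phi$ converges to the empty closed set. Yet $\{A\mid A\neq\varnothing\}\in\Phi$, so $\xi\cdot U{\textstyle\bigvee}(\Phi)=1>0=\bigvee_x\varnothing(x)$; both your $(\diamond)$ and $(\ast)$ fail. (With the correct structure $\fspstr{-}{-}$ this $\Phi$ does \emph{not} converge to $\varnothing$: pushing $\Phi$ along $\{1/n\}\mapsto(1/n,\{1/n\})$ produces $\frq$ over $\Phi$ with $U\pi_1(\frq)\to 0$ and $\xi\cdot U\ev(\frq)=1$, which forces any limit $\varphi$ to satisfy $\varphi(0)=1$.) Note also that your other direction, while correct, proves less than what is needed: since $\fspstr{-}{-}\le\hat a$, being a graph morphism with respect to $\hat a$ is a \emph{stronger} hypothesis than with respect to $\fspstr{-}{-}$, so ``graph morphism w.r.t.\ $\hat a$ $\Rightarrow$ compact'' is weaker than the stated implication. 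The proof has to be carried out with the exponential structure, as in \cite{Hof_TopTh} (the present paper imports the result from there and gives no proof of its own).
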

Here a $\Tth$-category $X=(X,a)$ is called compact if $k\le\bigvee\{a(\frx,x)\mid x\in X\}$, for every $\frx\in UX$. For topological spaces, compact has the usual meaning, and an approach space is compact if and only if its measure of compactness is $0$ (see \cite{Low_ApBook}). Also note that every $\V$-category is compact. In the proposition above, $\V^X$ is the $\Tth$-graph with structure matrix $\fspstr{-}{-}$ defined as
\[
\fspstr{\frp}{\varphi}=\hspace{-2ex}\bigwedge_{\substack{\frq\in T(X\times Y^X),x\in X\\ \frq\mapsto \frp}}\hspace{-2ex}\hom(a(T\pi_1(\frq),x),\hom(\xi\cdot T\!\ev(\frq),\varphi(x))).
\]
In fact, we apply here to $\V$ the right adjoint $(-)^X$ of $X\otimes-\:\Gph{\Tth}\to\Gph{\Tth}$ (see \cite{Hof_TopTh}). Note that we use here the same notation $\V^X$ for the $\Tth$-graph and the $\V$-category (defined on the same set of objects). However, if $\frp=e_{\V^X}(\varphi')$ with $\varphi'\in\V^X$ in the formula above, then
\[
\fspstr{e_{V^X}(\varphi')}{\varphi}=\bigwedge_{x\in X}\hom(\varphi'(x),\varphi(x))=\varphi\homkleisliright\varphi'=[\varphi',\varphi],
\]
i.e.\ the underlying $\V$-graph of the $\Tth$-graph $\V^X$ is actually the $\V$-category $\V^X$ described above. As a consequence, if $\varphi'\:X\kmodto E$ has a left adjoint $\psi\:E\kmodto X$ in $\Mod{\Tth}$, then
\[\fspstr{e_{Y^X}(\varphi')}{\varphi}=\varphi\kleisli\psi.\]

Proposition \ref{Compact} suggests now the following new notions. 

\begin{definition}\label{T-suprema}
 Let $A=(A,a)$ and $B=(B,b)$ be $\Tth$-graphs whose underlying $\V$-graphs are $\V$-categories; for shorthand, we will write $A$ (resp.\ $B$) for both the $\Tth$-graph and the underlying $\V$-category. A $\V$-functor $f\:A\to B$ is said to be \emph{$\Tth$-compatible} if, for each $\mT$-algebra $I$ and each $\Tth$-graph morphism $h\:I\to A$, the composite $f\cdot h$ is a $\Tth$-graph morphism as well. By a \emph{$\Tth$-diagram} in $A$ we mean a $\Tth$-graph morphism $D\:I\to A$ where $I$ is a $\mT$-algebra; and a supremum of a $\Tth$-diagram is a \emph{$\Tth$-supremum}. Finally, we say that a $\Tth$-compatible $\V$-functor $\Phi\:A\to B$ \emph{preserves $\Tth$-suprema} if $\Phi$ preserves suprema of $\Tth$-diagrams. 
\end{definition}
\begin{proposition}
For every $\Tth$-functor $f\:X\to Y$, the $\V$-functor $\V^f\:\V^Y\to\V^X$ underlies a $\Tth$-graph morphism, and hence is  $\Tth$-compatible. Moreover, $\V^f$ preserves $\Tth$-suprema (but in general not all suprema).
\end{proposition}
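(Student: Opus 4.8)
The plan is to prove the two assertions by different means: the first from the closed structure of $\Gph{\Tth}$, the second from Proposition \ref{Compact}.

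\emph{First assertion.} As a $\Tth$-graph, $\V^Y$ is the value at $\V$ of the internal hom $(-)^Y$, the right adjoint of $Y\otimes-\:\Gph{\Tth}\to\Gph{\Tth}$; its underlying set is $\Cat{\Tth}(Y,\V)$ and its underlying $\V$-category is the one described before the proposition. I would then realise $\V^f$ as the effect of the contravariant internal-hom functor on $f$: it is the transpose along $X\otimes-\dashv(-)^X$ of the $\Tth$-graph morphism
\[
X\otimes\V^Y\xrightarrow{f\otimes\id}Y\otimes\V^Y\xrightarrow{\ev}\V,
\]
where $\ev$ is the evaluation (the counit of $Y\otimes-\dashv(-)^Y$) and $f\otimes\id$ is a $\Tth$-graph morphism because $f$ and $\id$ are. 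A transpose of a $\Tth$-graph morphism is again one, so $\V^f$ underlies a $\Tth$-graph morphism $\V^Y\to\V^X$; on objects it is precomposition $\varphi\mapsto\varphi\cdot f$, which is exactly the $\V$-functor $\V^f$. $\Tth$-compatibility is then immediate, since for any $\mT$-algebra $I$ and $\Tth$-graph morphism $h\:I\to\V^Y$ the composite $\V^f\cdot h$ is again a composite of $\Tth$-graph morphisms. (One could instead verify the matrix inequality $T(\V^f)\cdot\fspstr{-}{-}\le\fspstr{-}{-}\cdot\V^f$ directly from the formula for $\fspstr{-}{-}$, using that $f$ is a $\Tth$-functor and the naturality of $\tau$, but this is considerably more laborious.)

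\emph{Second assertion.} The key point — and the main obstacle — is to show that a $\Tth$-supremum in $\V^Y$ is actually computed pointwise. Since the underlying $\V$-category of a $\mT$-algebra $I$ (viewed as a $\Tth$-category via $\Discrete$) is discrete, a $\Tth$-diagram $D\:I\to\V^Y$ has as its supremum the conical supremum $\bigvee_{i\in I}D(i)$, and the order on $\V^Y$ is the restriction of the pointwise order. Hence it suffices to prove that the pointwise supremum $\psi\:Y\to\V$, $\psi(y)=\bigvee_{i\in I}D(i)(y)$, is again a $\Tth$-functor; it is then automatically the least upper bound inside $\V^Y$. To this end I would transpose $D$: under the tensor--hom adjunctions it corresponds to a $\Tth$-graph morphism $Y\to\V^I$, $y\mapsto(i\mapsto D(i)(y))$. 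Because $I$ is a $\mT$-algebra, Proposition \ref{Compact} ensures that $\bigvee\:\V^I\to\V$ is a $\Tth$-functor; composing it with $Y\to\V^I$ exhibits $\psi=\bigvee\cdot(Y\to\V^I)$ as a $\Tth$-functor, as wanted.

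With this in place, preservation is formal. As precomposition with $f$, the functor $\V^f$ manifestly preserves pointwise suprema; and $\V^f\cdot D\:I\to\V^X$ is again a $\Tth$-diagram (a composite of $\Tth$-graph morphisms out of the $\mT$-algebra $I$), so both $\bigvee_i D(i)$ in $\V^Y$ and $\bigvee_i\V^f(D(i))$ in $\V^X$ are pointwise by the previous step. Therefore
\[
\V^f\Bigl(\bigvee_{i\in I}D(i)\Bigr)=\bigvee_{i\in I}\V^f(D(i)),
\]
which is the asserted preservation of $\Tth$-suprema. The failure for arbitrary suprema is explained by the fact that a general supremum in $\V^Y$ is the pointwise supremum followed by the reflection into $\Cat{\Tth}(Y,\V)$, with which $\V^f$ need not commute — the $\R_E$ versus $\R_D$ example already exhibits this phenomenon.
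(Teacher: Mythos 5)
The paper states this proposition without proof, so there is no official argument to compare against; yours is correct and is evidently the intended one, realising $\V^f$ as the transpose of $\ev\cdot(f\otimes\id)$ under the closed structure of $\Gph{\Tth}$ and using Proposition \ref{Compact} (via transposition of the $\Tth$-diagram $D$ to a $\Tth$-graph morphism $Y\to\V^I$) to show that $\Tth$-suprema in $\V^Y$ are computed pointwise, whence preservation by precomposition is automatic. The only phrase worth tightening is ``least upper bound'': what is needed is the enriched (conical) supremum, and this does follow, since the pointwise supremum satisfies $[\bigvee_{i}\varphi_i,\varphi']=\bigwedge_{i}[\varphi_i,\varphi']$ in $P(SY)$ and $\V^Y$ is a full sub-$\V$-category, so once the pointwise supremum is known to lie in $\V^Y$ it is the supremum there in the enriched sense.
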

\begin{remark}
We consider the Yoneda morphism $\yoneda\:X\to\tilde{X}$. Then $\V^{\yoneda}\:\V^{\tilde{X}}\to\V^X$ is an isomorphism of $\V$-categories, where $\V^{\yoneda}$ sends $\tilde{\varphi}\:\tilde{X}\to\V$ to its restriction $\varphi\:X\to\V$. In fact, when considering $\varphi$, $\tilde{\varphi}$ as $\Tth$-distributors $\varphi\:E\kmodto X$ and $\tilde{\varphi}\:E\kmodto\tilde{X}$, we have $\tilde{\varphi}=\yoneda_*\kleisli\varphi$ resp.\ $\varphi=\yoneda^*\kleisli\tilde{\varphi}$. Hence, since $\yoneda_*\dashv\yoneda^*$ is an equivalence of $\Tth$-distributors,
\[
\tilde{\varphi}\homkleisliright\tilde{\psi}=\varphi\homkleisliright\psi
\]
for all $\tilde{\varphi},\tilde{\psi}\:\tilde{X}\to\V$. Its inverse $\Phi\:\V^X\to\V^{\tilde{X}},\,\varphi\to\tilde{\varphi}$ certainly preserves all suprema. Moreover, $\Phi$ is $\Tth$-compatible. To see this, let $h\:I\to\V^X$ be a $\Tth$-graph morphism where $I$ is a $\Tth$-algebra. Since $\V$ is injective with respect to fully faithful $\Tth$-functors, we have an (in fact unique) extension $l\:\tilde{X}\otimes I\to\V$ of $\umate{h}\:X\otimes I\to\V$ along $\yoneda\otimes\id_I\:X\otimes I\to\tilde{X}\otimes I$. Then $\mate{l}(i)\cdot\yoneda=h(i)$ for each $i\in I$, and therefore $\mate{l}=\Phi\cdot h$.
\end{remark}
\begin{definition}\label{T-colimit}
Assume that the underlying $\V$-category of $A$ has all tensors. A \emph{$\Tth$-weighted} diagram in $A$ is given by a set $I$ together with a $\mT$-algebra structure $\alpha\:TI\to I$ and a $\V$-category structure $r\:I\relto I$, a $\V$-functor $h\:I\to A$ and a $\V$-distributor $\psi\:1\modto I$ such that the map
\[
I\to A,\;i\mapsto \psi(i)\otimes h(i)
\]
is a $\Tth$-graph morphism. The colimit of a $\Tth$-weighted diagram in $A$ is called a \emph{$\Tth$-colimit}, and $A$ is \emph{$\Tth$-cocomplete} if all $\Tth$-colimits exist in $A$. A $\V$-distributor $\varphi\:1\modto A$ is called \emph{$\Tth$-generated} if $\varphi=h_*\cdot\psi$ in $\Mod{\V}$, for $h$ and $\psi$ as above.
\end{definition}
\begin{proposition}\label{T-cocomplete}
The following assertions are equivalent, for a $\Tth$-graph $A=(A,a)$ where $SA$ is a $\V$-category. 
\begin{eqcond}
\item $A$ is $\Tth$-cocomplete.
\item $A$ has all tensors and all $\Tth$-suprema.
\item Each $\Tth$-generated $\V$-distributor $\varphi\:1\modto A\in P(A)$ has a supremum $\ssup_A(\varphi)$ in $A$.
\end{eqcond}
\end{proposition}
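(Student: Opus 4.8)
\section*{Proof proposal}

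The plan is to prove the two non-trivial implications (ii)$\Rightarrow$(i) and (i)$\Rightarrow$(ii) together with the equivalence (i)$\Leftrightarrow$(iii), all resting on a single standard identity of $\V$-category theory: for a $\V$-functor $h\:I\to A$ into a tensored $\V$-category and a weight $\psi\:1\modto I$ in $\Mod{\V}$, the weighted colimit decomposes as
\[
\colim(\psi,h)=\ssup_A(h_*\cdot\psi)=\ssup_A\{\psi(i)\otimes h(i)\mid i\in I\},
\]
meaning that whenever one of these exists so do the others and they coincide. Since all three conditions already involve the tensored structure --- (i) and (ii) by definition, and (iii) because the very notion of a $\Tth$-generated distributor is phrased through a $\Tth$-weighted diagram --- I treat ``$A$ has all tensors'' as available throughout. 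I would establish the identity by comparing universal properties: unravelling $A(\ssup_A(h_*\cdot\psi),b)$, using the defining adjunction $A(v\otimes x,b)=\hom(v,A(x,b))$ of a tensor together with the enriched Yoneda lemma, reduces both outer expressions to $\bigwedge_{i}\hom(\psi(i),A(h(i),b))$, which is exactly the cocone-defining property of $\colim(\psi,h)$; in particular the conical supremum of the pointwise tensored family equals the weighted colimit.

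Granting this identity, the equivalence (i)$\Leftrightarrow$(iii) is almost formal. By Definition \ref{T-colimit} a $\Tth$-colimit is precisely a weighted colimit $\colim(\psi,h)$ of a $\Tth$-weighted diagram, and the distributor $h_*\cdot\psi$ associated with it is by definition a $\Tth$-generated distributor; conversely every $\Tth$-generated distributor arises in this way. The displayed identity then says that the $\Tth$-colimit of $(\psi,h)$ exists if and only if $\ssup_A(h_*\cdot\psi)$ exists, and that they agree. Hence ``all $\Tth$-colimits exist'' is literally the same as ``every $\Tth$-generated distributor admits a supremum'', which is (iii); one only notes that two $\Tth$-weighted diagrams inducing the same $\Tth$-generated distributor have the same colimit, so the passage between diagrams and distributors is well behaved on both sides.

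For (ii)$\Rightarrow$(i) I would start from a $\Tth$-weighted diagram and form the pointwise tensored family $D\:I\to A$, $D(i)=\psi(i)\otimes h(i)$. The defining clause of Definition \ref{T-colimit} is exactly that this $D$ is a $\Tth$-graph morphism out of the $\mT$-algebra $I$, i.e.\ a $\Tth$-diagram in the sense of Definition \ref{T-suprema}; so by (ii) its $\Tth$-supremum $\ssup_A(D)$ exists, and by the displayed identity $\ssup_A(D)=\colim(\psi,h)$, giving the $\Tth$-colimit. Conversely, for (i)$\Rightarrow$(ii) I take a $\Tth$-diagram $D\:I\to A$ with $I$ a $\mT$-algebra and present it as a $\Tth$-weighted diagram: equip $I$ with its canonical underlying $\V$-category structure, which for a $\mT$-algebra is discrete (indeed $\ForgetToV\circ\Discrete$ sends $(I,\alpha)$ to $(I,e_I^\circ\cdot\alpha^\circ)=(I,\id_I)$), and choose $h:=D$ together with the constant weight $\psi:=k$. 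Over a discrete $\V$-category every function is a $\V$-functor and every $\V$-valued map a distributor, and $\psi(i)\otimes h(i)=k\otimes D(i)=D(i)$, so this is a genuine $\Tth$-weighted diagram whose associated family is $D$; its $\Tth$-colimit exists by (i) and equals $\ssup_A(D)$, the desired $\Tth$-supremum.

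The main obstacle is not any single computation but the bookkeeping that makes the two ``diagram'' notions coincide: one must check that the pointwise tensored map $i\mapsto\psi(i)\otimes h(i)$ attached to a $\Tth$-weighted diagram is \emph{literally} a $\Tth$-diagram (so that the hypothesis of (ii) applies), and, in the reverse direction, that a bare $\Tth$-diagram can be re-presented as a $\Tth$-weighted diagram without disturbing its conical colimit. The only delicate points there are that the discrete $\V$-category structure chosen on $I$ is a legitimate choice compatible with the $\mT$-algebra structure --- justified by the computation of the underlying $\V$-category of a $\Discrete$-algebra above --- and that $k\otimes D(i)=D(i)$ holds on the nose under the standard normalisation of the tensor on the unit (otherwise merely up to isomorphism, which leaves the colimit unchanged).
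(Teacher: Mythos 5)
The paper states this proposition without proof, so there is no official argument to compare against; judged on its own, your proof is correct and is evidently the intended one: all three conditions reduce, via the tensor adjunction and the Yoneda lemma, to the single identity $\colim(\psi,h)=\ssup_A(h_*\cdot\psi)=\ssup_A\{\psi(i)\otimes h(i)\mid i\in I\}$, valid in the sense that if any one side exists then all do and they agree. Your two interpretive choices are also the right ones: reading (i) and (iii) as presupposing tensors, as forced by the standing assumption of Definition \ref{T-colimit} (without this, (i)$\Rightarrow$(ii) would fail, since tensors cannot be manufactured as $\Tth$-colimits unless they already exist, the defining condition of a $\Tth$-weighted diagram mentioning them explicitly), and normalising $k\otimes D(i):=D(i)$ so that the tensored family attached to the constant weight $\psi=k$ over the discretely $\V$-enriched $\mT$-algebra $I$ is literally the given $\Tth$-diagram $D$ (a point worth insisting on, because for a mere $\Tth$-graph $A$, with no composition law available, being a $\Tth$-graph morphism is not obviously invariant under pointwise isomorphism).
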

Note that a $\Tth$-compatible and tensor-preserving $\V$-functor $f\:A\to B$ sends $\Tth$-weighted diagrams in $A$ to $\Tth$-weighted diagrams in $B$. In fact, we have
\begin{proposition}
Let $A$ and $B$ be $\Tth$-graphs whose underlying $\V$-categories are $\Tth$-cocomplete, and let $f\:A\to B$ be a $\Tth$-compatible $\V$-functor which preserves tensors. Then $f$ preserves $\Tth$-colimits if and only if $f$ preserves $\Tth$-suprema.
\end{proposition}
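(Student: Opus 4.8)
The plan is to reduce both implications to a single observation, which is essentially the content of Proposition \ref{T-cocomplete}: a $\Tth$-colimit is precisely the conical supremum of a \emph{tensored} $\Tth$-diagram. First I would record this key computation. Given a $\Tth$-weighted diagram $(I,\alpha,r,h,\psi)$ in $A$, consider the map $g\:I\to A$, $i\mapsto\psi(i)\otimes h(i)$; it is a $\Tth$-graph morphism by the very definition of a $\Tth$-weighted diagram, and since $I$ is a $\mT$-algebra it is a $\Tth$-diagram. Because $A$ is tensored, the universal properties of the tensor and of the conical supremum give, for every $x\in A$,
\[
SA\bigl(\ssup_A g,\,x\bigr)=\bigwedge_{i\in I}SA\bigl(\psi(i)\otimes h(i),x\bigr)=\bigwedge_{i\in I}\hom\bigl(\psi(i),SA(h(i),x)\bigr),
\]
which is the defining property of the $\psi$-weighted colimit of $h$. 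Hence the $\Tth$-colimit of $(I,\alpha,r,h,\psi)$ coincides with the $\Tth$-supremum $\ssup_A g=\bigvee_{i\in I}\psi(i)\otimes h(i)$.

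For the implication that preservation of $\Tth$-colimits entails preservation of $\Tth$-suprema, I would exhibit every $\Tth$-supremum as a $\Tth$-colimit. Given a $\Tth$-diagram $D\:I\to A$, I would equip the $\mT$-algebra $I$ with the discrete $\V$-category structure, take $h=D$ (a $\V$-functor, since its domain is discrete) and $\psi=\Delta_k\:1\modto I$ the distributor constant at the unit $k$ (a distributor for the discrete structure). Then $i\mapsto\psi(i)\otimes h(i)=D(i)$ is just $D$, so $(I,\alpha,r,D,\Delta_k)$ is a $\Tth$-weighted diagram whose $\Tth$-colimit is $\bigvee_i D(i)=\ssup_A D$. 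Thus the $\Tth$-supremum of $D$ is a $\Tth$-colimit, and any $f$ preserving $\Tth$-colimits preserves it.

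For the converse I would assume $f$ preserves tensors and $\Tth$-suprema. Given a $\Tth$-weighted diagram $(I,\alpha,r,h,\psi)$ with associated $\Tth$-diagram $g$, the remark preceding the statement applies: $\Tth$-compatibility together with tensor-preservation makes $(I,\alpha,r,f\cdot h,\psi)$ a $\Tth$-weighted diagram in $B$ (its tensored diagram is $f\cdot g$, again a $\Tth$-graph morphism). Chaining the formula in $A$, preservation of the $\Tth$-supremum $\ssup_A g$, tensor-preservation, and the formula in $B$, I compute
\[
f\bigl(\colim(\psi,h)\bigr)=f\Bigl(\bigvee_i\psi(i)\otimes h(i)\Bigr)=\bigvee_i f\bigl(\psi(i)\otimes h(i)\bigr)=\bigvee_i\psi(i)\otimes f(h(i))=\colim(\psi,f\cdot h),
\]
so $f$ preserves this $\Tth$-colimit; as the weighted diagram was arbitrary, $f$ preserves all $\Tth$-colimits.

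I expect the only delicate point to be the first step, namely verifying that the $\Tth$-colimit genuinely coincides with the conical supremum of the tensored diagram $g$, and that $g$ has a $\mT$-algebra as its domain so that $\ssup_A g$ really qualifies as a $\Tth$-supremum. This is exactly what allows the two preservation properties to be traded for one another, and it is already secured by Proposition \ref{T-cocomplete}; everything else is a routine chase through the universal properties of tensors and conical suprema.
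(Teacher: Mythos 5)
Your argument is correct and is essentially the proof the paper has in mind: the paper gives no written proof beyond the remark immediately preceding the statement (that a $\Tth$-compatible, tensor-preserving $\V$-functor sends $\Tth$-weighted diagrams to $\Tth$-weighted diagrams), and that remark together with the decomposition of $\Tth$-colimits into tensors plus conical $\Tth$-suprema underlying Proposition~\ref{T-cocomplete} is precisely what you spell out, including the realisation of any $\Tth$-supremum as the $\Tth$-colimit of a discretely $\V$-structured diagram with constant weight $k$. The only pedantic point is that $k\otimes D(i)$ is a priori only isomorphic to $D(i)$, so in that step you should take $D(i)$ itself as the chosen tensor (it satisfies the universal property), making the tensored map literally the $\Tth$-graph morphism $D$.
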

Based on the considerations above, we now propose a $\Tth$-equivalent for the concept of a co-frame (but note that we call these ``$\Tth$-frames'' and not ``$\Tth$-co-frames''):
\begin{definition}\label{TFrm}
$\Frm{\Tth}$ is the locally ordered category with:
\begin{description}
\item[objects] {\em $\Tth$-frames}, i.e.\ $\Tth$-graphs $A$ whose underlying $\V$-graph is a complete $\V$-category satisfying the following {\em distributivity law}: for any distributor $\varphi\:I\modto 1$ and functor $h\:I\to PA$ such that $h(i)$ is $\Tth$-generated for all $i\in I$, if $\lim(\varphi,h)$ is $\Tth$-generated then $\ssup_A(\lim(\varphi,h))=\lim(\varphi,\ssup_A\!\cdot\ h)$.
\item[morphisms] {\em $\Tth$-frame homomorphisms}, i.e.\ $\Tth$-compatible $\V$-functors between the underlying $\V$-categories of $\Tth$-frames, that furthermore preserve weighted limits and $\Tth$-weighted colimits.
\end{description}
\end{definition}
By construction, we have a canonical forgetful functor $\Frm{\Tth}\to\Cont{\V}$. Earlier we already explained that $\V\in\Cat{\Tth}$ and that the representable functor $\Cat{\Tth}(-,\V)\:\Cat{\Tth}^\op\to\ORD$ lifts to a functor $\V^-\:\Cat{\Tth}^\op\to\Cont{\V}$. Now we can prove:
\begin{corollary}\label{functor1}
The functor $\V^-\:\Cat{\Tth}^\op\to\Cont{\V}$ factors through the forgetful functor $\Frm{\Tth}\to\Cont{\V}$; we call the resulting functor $\Omega\:\Cat{\Tth}^\op\to\Frm{\Tth}$.
\end{corollary}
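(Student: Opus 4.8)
The goal is to show that $\V^-$ lands in $\Frm{\Tth}$, which means checking two things: that each $\V^X$ is a $\Tth$-frame, and that each $\V^f$ is a $\Tth$-frame homomorphism. Much is already in place. Since $\V^-\:\Cat{\Tth}^\op\to\Cont{\V}$ is a functor, each $\V^X$ is a complete $\V$-category; being complete it has all tensors and all suprema, in particular all $\Tth$-suprema, so by Proposition~\ref{T-cocomplete} it is $\Tth$-cocomplete. Moreover, as recorded when $\V^-$ was introduced, every $\V^f$ preserves infima, tensors and cotensors, i.e.\ all weighted limits. It remains to treat the genuinely $\Tth$-theoretic conditions.

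I would dispose of the morphism part first. By the Proposition preceding Definition~\ref{T-suprema}, $\V^f$ underlies a $\Tth$-graph morphism---hence is $\Tth$-compatible---and it preserves $\Tth$-suprema. As $\V^Y$ and $\V^X$ are $\Tth$-cocomplete and $\V^f$ is $\Tth$-compatible and tensor-preserving, the final proposition of this section upgrades ``preserves $\Tth$-suprema'' to ``preserves $\Tth$-colimits''. Together with the preservation of weighted limits noted above, this is exactly the definition of a $\Tth$-frame homomorphism.

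For the object part, the decisive point is that in $\V^X$ both weighted limits \emph{and} $\Tth$-suprema are computed pointwise, i.e.\ are preserved by every evaluation $\ev_x\:\V^X\to\V$. Limits are pointwise because, by the Lemma, the operations $\bigwedge$ and $\hom(v,-)$ are $\Tth$-functors, so $\V^X$ is closed in $P(SX)$ under products and cotensors and its weighted limits coincide with those of $P(SX)$, which are computed pointwise in $\V$. The $\Tth$-suprema are pointwise for a subtler reason: a $\Tth$-diagram $D\:I\to\V^X$ with $I$ a $\mT$-algebra transposes, across the adjunction $(-)\otimes I\dashv(-)^I$, to a $\Tth$-graph morphism $X\to\V^I$, and postcomposing with the $\Tth$-functor $\bigvee\:\V^I\to\V$ of Proposition~\ref{Compact} exhibits the pointwise supremum $x\mapsto\bigvee_iD(i)(x)$ as a $\Tth$-functor; thus it lies in $\V^X$ and is the supremum there. (This refines the earlier observation that general suprema in $\V^X$ need not be pointwise: only the ``compact'', $\mT$-algebra-indexed ones are.) Applying the evaluations $\ev_x$ to both sides of the distributivity law therefore reduces it to the same law in $\V$, and it suffices to prove that $\V$ is a $\Tth$-frame.

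This last step is the main obstacle. Completeness of $\V=(\V,\hom_\xi)$ is clear, so everything comes down to the distributivity law: the $\Tth$-supremum $\ssup_\V(\varphi)=\bigvee_v\varphi(v)\otimes v$ of a $\Tth$-generated presheaf must commute with the $\Tth$-generated weighted limits, which are built from $\bigwedge$ and $\hom(w,-)$. The interaction with tensors is immediate from the quantale axiom that $\otimes$ preserves arbitrary suprema. The remaining and more delicate point is that the compact, $\mT$-algebra-indexed suprema commute with $\bigwedge$ and with $\hom(w,-)$; this is the abstract form of the co-frame law ``finite joins distribute over arbitrary meets''. Here the Lemma is again what drives the argument---it makes $\bigwedge$ and $\hom(w,-)$ into $\Tth$-functors, hence $\Tth$-compatible---and one combines this with the $\Tth$-functoriality of $\bigvee\:\V^I\to\V$ (Proposition~\ref{Compact}) to interchange the compact supremum with the limit. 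Extracting this interchange from the topological-theory axioms, rather than from lattice theory alone, is where the real work lies; once it is done, we conclude that each $\V^X$ is a $\Tth$-frame and hence that $\V^-$ corestricts to the asserted functor $\Omega$.
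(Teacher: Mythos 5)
Your treatment of the morphism part is essentially the paper's: the proposition preceding Definition \ref{T-suprema} makes $\V^f$ a $\Tth$-compatible $\Tth$-graph morphism preserving $\Tth$-suprema, and since it also preserves infima, tensors and cotensors, the last proposition before Definition \ref{TFrm} upgrades this to preservation of $\Tth$-weighted colimits. Your observation that $\Tth$-suprema in $\V^X$ are computed pointwise (transpose the $\Tth$-diagram across $X\otimes-\dashv(-)^X$ and compose with $\bigvee\:\V^I\to\V$ from Proposition \ref{Compact}) is also sound; it is in effect the statement that $\V^X$ is closed in $P(SX)$ under $\Tth$-weighted colimits, which the paper needs as well.

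The object part, however, has a genuine gap. You reduce the distributivity law for $\V^X$ to the assertion that $\V$ itself satisfies it, and then you do not prove that assertion: you write that extracting the interchange of compact suprema with $\bigwedge$ and $\hom(w,-)$ is ``where the real work lies; once it is done, we conclude\dots''. That interchange \emph{is} the distributivity law, so the core of the proof is missing. Moreover, the tools you point to cannot supply it: the Lemma and Proposition \ref{Compact} say that $\bigwedge$, $\hom(v,-)$ and $\bigvee$ are $\Tth$-functors, i.e.\ compatible with the $\Tth$-graph structures; they say nothing about colimits commuting with limits. Note also that ``$\V$ is a $\Tth$-frame'' is essentially the instance $X=E$ of the corollary itself, so it cannot simply be assumed. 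The paper closes exactly this gap with one fact you never invoke: the presheaf $\V$-category $P(SX)$ is \emph{completely distributive} (as is any presheaf category over a quantale, cf.\ \cite{Stu_Dynamics}), and since $\V^X$ is closed in $P(SX)$ under weighted limits and $\Tth$-weighted colimits, both sides of the distributivity law may be computed in $P(SX)$, where they agree. The same fact would also rescue your route --- $(\V,\hom)\cong P(1)$ is itself a presheaf category, hence completely distributive --- but once complete distributivity of presheaf categories is in hand, it is simpler to apply it to $P(SX)$ directly and skip the pointwise-reduction detour altogether.
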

\begin{proof}
For each $\Tth$-functor $f\:X\to Y$, the underlying $\V$-graph of the $\Tth$-graph $\V^X$ is a complete $\V$-category, and $\V^f\:\V^Y\to\V^X$ is a $\Tth$-graph morphism which preserves all weighted limits and all $\Tth$-weighted colimits. Furthermore $A=\V^X$ satisfies the distributivity axiom in Definition \ref{TFrm} since the presheaf $\V$-category $P(SX)$ is completely distributive, and $A$ is closed in $P(SX)$ under weighted limits and $\Tth$-weighted colimits.
\end{proof}
Since $\V\in\Frm{\Tth}$, we certainly have a representable functor $\Frm{\Tth}(-,\V)\:(\Frm{\Tth})^\op\to\ORD$. But there is more:
\begin{corollary}\label{functor2}
The functor $\Frm{\Tth}(-,\V)\:(\Frm{\Tth})^\op\to\ORD$ lifts to a functor $\pt\:(\Frm{\Tth})^\op\to\Cat{\Tth}$.
\end{corollary}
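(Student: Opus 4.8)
\medskip

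The plan is to mirror the classical definition of the space of points, in which $\pt(F)=\FRM(F,\two)$ is topologised by the subbasic opens $\{p\mid p(a)=1\}$ indexed by $a\in F$ --- equivalently, $\pt(F)$ is the subspace of the Sierpi\'nski cube $\two^{F}$ carved out by the frame homomorphisms. The $\Tth$-categorical analogue replaces $\two$ by the $\Tth$-category $\V=(\V,\hom_\xi)$ and ``subspace of a cube'' by an initial structure with respect to evaluation maps.

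First I would put $\pt(A):=\Frm{\Tth}(A,\V)$ for a $\Tth$-frame $A$, and turn this set into a $\Tth$-category as follows. Every $\Tth$-frame homomorphism $p\:A\to\V$ is in particular a function of underlying sets, so there is an evaluation $\ev_a\:\pt(A)\to\V$, $p\mapsto p(a)$, for each $a\in A$. Recalling that $\Cat{\Tth}$ admits initial structures over $\SET$, I equip $\pt(A)$ with the initial $\Tth$-category structure relative to the source $(\ev_a)_{a\in A}$; explicitly its structure matrix is $s(\frP,p)=\bigwedge_{a\in A}\hom(\xi(T\ev_a(\frP)),p(a))$ for $\frP\in T(\pt(A))$ and $p\in\pt(A)$. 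Since this is the substructure of a power of the $\Tth$-category $\V$, and since powers and initial (fully faithful) substructures of $\Tth$-categories again satisfy $e\le s$ and $s\circ s\le s$, the pair $(\pt(A),s)$ is a genuine $\Tth$-category; for $\Tth=\Uth_\two$ it is exactly the subspace of the Sierpi\'nski cube described above.

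On morphisms I would send a $\Tth$-frame homomorphism $g\:A\to B$ to $\pt(g)\:\pt(B)\to\pt(A)$, $p\mapsto p\cdot g$; this lands in $\pt(A)$ because $\Tth$-frame homomorphisms compose, and as a function it is precisely $\Frm{\Tth}(g,\V)$. That $\pt(g)$ is a $\Tth$-functor follows from the universal property of the initial structure on $\pt(A)$: it suffices that each $\ev_a\cdot\pt(g)$ be a $\Tth$-functor, and indeed $(\ev_a\cdot\pt(g))(p)=p(g(a))=\ev_{g(a)}(p)$, so this composite is the evaluation $\ev_{g(a)}\:\pt(B)\to\V$, which is a $\Tth$-functor by the very construction of the structure on $\pt(B)$. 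Functoriality $\pt(\id)=\id$ and $\pt(g\cdot h)=\pt(h)\cdot\pt(g)$ is then immediate, holding already at the level of underlying functions. Finally, to see that $\pt$ \emph{lifts} $\Frm{\Tth}(-,\V)\:(\Frm{\Tth})^\op\to\ORD$ along the underlying-order functor $\Cat{\Tth}\to\ORD$, I would check that the order underlying $(\pt(A),s)$ is pointwise: its underlying $\V$-category has hom $\bigwedge_{a}\hom(p(a),q(a))$, whence $p\le q$ iff $p(a)\le q(a)$ for all $a\in A$, which is exactly the local order of $\Frm{\Tth}(A,\V)$.

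I expect the only genuinely delicate points to be the two compatibilities invoked above: that an initial substructure of a $\Tth$-category is again a $\Tth$-category (so that $\pt(A)$ satisfies transitivity, not merely reflexivity), and that the order read off from this $\Tth$-categorical structure coincides with the local order of $\Frm{\Tth}$. Both are straightforward from the fact that fully faithful morphisms reflect the $\Tth$-category axioms and that the underlying $\V$-category of $\V=(\V,\hom_\xi)$ is the usual $(\V,\hom)$; the $\Tth$-functoriality of each $\pt(g)$ and the functoriality of $\pt$ are then formal consequences of initiality.
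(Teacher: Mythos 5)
Your proposal is correct and takes essentially the same route as the paper: the paper's proof consists precisely of equipping $\Frm{\Tth}(A,\V)$ with the \emph{largest} $\Tth$-category structure making all evaluation maps $\ev_{A,a}\:\Frm{\Tth}(A,\V)\to\V$ into $\Tth$-functors, which is exactly the initial structure over $\SET$ that you construct, with functoriality of $\pt$ following from the same universal property. Your write-up merely makes explicit what the paper leaves implicit (the structure matrix $s(\frP,p)=\bigwedge_{a\in A}\hom(\xi(T\ev_a(\frP)),p(a))$, the inheritance of the $\Tth$-category axioms, and the agreement of the underlying order with the local order of $\Frm{\Tth}$).
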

\begin{proof}
This is done by putting on $\Frm{\Tth}(X,\V)$ the largest $\Tth$-category structure that makes all evaluation maps $\ev_{X,x}\:\Frm{\Tth}(X,\V)\to\V,\;h\mapsto h(x)$ into $\Tth$-functors.
\end{proof}
Note how, in the two previous corollaries, $\V$ plays the role of a {\em dualising object}: it is on the one hand an object of $\Cat{\Tth}$, and as such represents the functor $\Omega\:\Cat{\Tth}^\op\to\Frm{\Tth}$; but it is also an object of $\Frm{\Tth}$, and as such represents the functor $\pt\:(\Frm{\Tth})^\op\to\Cat{\Tth}$. Next we observe:
\begin{proposition}\label{nattrans}
There is a natural transformation $\eta\:\Id\natto\pt\cdot\Omega$ with components
\[
\eta_X\:X\to\pt(\Omega(X)),\;x\mapsto\ev_{X,x}\mbox{ \ \ for \ \ }X\in\Cat{\Tth}.
\]
\end{proposition}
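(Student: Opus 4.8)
The plan is to unwind the two functors and then verify the three things a natural transformation requires: that each $\eta_X$ actually takes values in $\pt(\Omega(X))$, that each $\eta_X$ is a $\Tth$-functor, and that the naturality squares commute. Writing $\Omega(X)=\V^X$ (Corollary \ref{functor1}) and $\pt(\Omega(X))=\Frm{\Tth}(\V^X,\V)$ with the structure of Corollary \ref{functor2}, the component sends $x$ to the evaluation $\ev_{X,x}\:\V^X\to\V,\ \varphi\mapsto\varphi(x)$, so the first task is to see that $\ev_{X,x}$ is a genuine $\Tth$-frame homomorphism.

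For that well-definedness I would check the three clauses of Definition \ref{TFrm} (read through Definition \ref{T-suprema}). As a $\V$-functor, $\ev_{X,x}$ is simply evaluation of presheaves at the object $x$, restricted from $P(\ForgetToV X)$ to $\V^X$. Preservation of weighted limits and of $\Tth$-weighted colimits follows from the fact, recorded in the proof of Corollary \ref{functor1}, that $\V^X$ is closed in $P(\ForgetToV X)$ under exactly these; hence they are computed pointwise in the presheaf $\V$-category, and evaluation at a fixed object preserves pointwise (co)limits. For $\Tth$-compatibility I would observe that $\ev_{X,x}$ underlies a $\Tth$-graph morphism $\V^X\to\V$: since $X$ is reflexive, the point $x$ is a $\Tth$-functor $x\:E\to X$, and precomposing the evaluation counit $\ev\:X\otimes\V^X\to\V$ of the adjunction $X\otimes-\dashv(-)^X$ with $x\otimes\id$ (and the unit isomorphism $\V^X\cong E\otimes\V^X$) exhibits $\ev_{X,x}$ as a composite of $\Tth$-graph morphisms. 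Then for any $\mT$-algebra $I$ and $\Tth$-graph morphism $h\:I\to\V^X$ the composite $\ev_{X,x}\cdot h$ is again a $\Tth$-graph morphism, which is precisely $\Tth$-compatibility.

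To see that $\eta_X$ is itself a $\Tth$-functor, I would use the universal property of the structure in Corollary \ref{functor2}: it is the largest $\Tth$-category structure making every $\ev_{\V^X,\varphi}\:\pt(\Omega(X))\to\V$ a $\Tth$-functor, i.e.\ the initial lift of that source, so a map into it is a $\Tth$-functor as soon as all its composites with the $\ev_{\V^X,\varphi}$ are. For $\eta_X$ one computes $(\ev_{\V^X,\varphi}\cdot\eta_X)(x)=\ev_{X,x}(\varphi)=\varphi(x)$, so this composite is just $\varphi\:X\to\V$, a $\Tth$-functor by definition of $\V^X$; hence $\eta_X$ is a $\Tth$-functor.

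Finally, naturality. For a $\Tth$-functor $f\:X\to Y$ one has $\Omega(f)=\V^f\:\V^Y\to\V^X,\ \psi\mapsto\psi\cdot f$, and $(\pt\cdot\Omega)(f)\:h\mapsto h\cdot\V^f$. Evaluating both legs of the square at $x\in X$ and then at $\psi\in\V^Y$ yields $\psi(f(x))$ in each case, so the two $\Tth$-functors $(\pt\cdot\Omega)(f)\cdot\eta_X$ and $\eta_Y\cdot f$ agree as functions, hence as $\Tth$-functors. I expect the only real work to be the first step, namely confirming that $\ev_{X,x}$ meets every requirement of a $\Tth$-frame homomorphism and in particular its $\Tth$-compatibility; the second step is a formal application of the universal property of the $\pt$-structure, and the naturality is a one-line element computation.
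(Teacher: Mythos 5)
Your argument is correct, but it is worth pointing out that the paper offers no explicit proof of Proposition \ref{nattrans} at all: the one genuine issue, namely that $\ev_{X,x}$ really lands in $\Frm{\Tth}(\Omega(X),\V)$, is only justified retroactively by the machinery assembled for Theorem \ref{MainThm}. There the paper computes $[x^*,\varphi]=\varphi\kleisli x_*=\varphi(x)$, so that $\ev_{X,x}=[x^*,-]$, and then applies the Lemma preceding Proposition \ref{FrmMorph}: since $x_*\dashv x^*$ in $\Mod{\Tth}$, the representable $[x^*,-]$ is a $\Tth$-graph morphism preserving infima, cotensors, tensors and $\Tth$-suprema, i.e.\ a $\Tth$-frame homomorphism. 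You reach the same conclusion without ever invoking the adjunction $x_*\dashv x^*$: you get $\Tth$-compatibility from the exponential structure of $\V^X$ in $\Gph{\Tth}$ (the counit $\ev\:X\otimes\V^X\to\V$ precomposed with the point $x\:E\to X$, a $\Tth$-functor by reflexivity and $T1=1$), and preservation of weighted limits and $\Tth$-weighted colimits from the closure of $\V^X$ in $P(SX)$ asserted in the proof of Corollary \ref{functor1}, which forces these (co)limits to be pointwise. Both routes are sound; yours is more self-contained and elementary, while the paper's route packages well-definedness into the order-isomorphism $\Map(\Mod{\Tth})(E,X)\cong\Frm{\Tth}(\Omega(X),\V)$ that it needs anyway for Theorem \ref{MainThm}. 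Your remaining two steps --- reading the ``largest structure'' of Corollary \ref{functor2} as an initial lift (legitimate: in $\Cat{\Tth}$, which is topological over $\SET$, the largest structure making a source of maps into $\Tth$-categories componentwise $\Tth$-functors is exactly the initial one), and the elementwise naturality check --- are precisely the details the paper leaves implicit, and they are right.
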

We do not know whether $\eta_X$ is always fully faithful, but we do have the following result (recall that $E$ is the unit for the tensor in $\Cat{\Tth}$):
\begin{theorem}\label{MainThm}
For any $X\in\Cat{\Tth}$, $\pt(\Omega(X))$ has the same objects as the Cauchy completion $\tilde{X}$ of $X$. In fact, we have an isomorphism $\Map(\Mod{\Tth})(E,X)\to\Frm{\Tth}(\Omega(X),\V)$ of ordered sets, making the diagram
\[
\xymatrix@C=1ex@R=8ex{ & X\ar[dl]_{(-)_*}\ar[dr]^{\eta_X}\\ \Map(\Mod{\Tth})(E,X)\ar[rr] && \Frm{\Tth}(\Omega(X),\V)}
\]
commute. Hence $X$ is Cauchy complete if and only if $\eta_X$ is surjective.
\end{theorem}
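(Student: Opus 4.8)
The candidate for the asserted isomorphism is the map
$$\Phi\:\Map(\Mod{\Tth})(E,X)\to\Frm{\Tth}(\Omega(X),\V),\qquad \psi\mapsto(-\kleisli\psi),$$
which sends a left adjoint $\Tth$-distributor $\psi\:E\kmodto X$, with right adjoint $\varphi_\psi\:X\kmodto E$ (an element of $\V^X=\Omega(X)$), to the $\V$-functor $\varphi\mapsto\varphi\kleisli\psi$. As recalled just before Definition \ref{T-suprema}, $\varphi\kleisli\psi=\fspstr{e_{\V^X}(\varphi_\psi)}{\varphi}=[\varphi_\psi,\varphi]$, so $\Phi(\psi)$ is nothing but the representable $\V$-functor $[\varphi_\psi,-]$ on $\V^X$. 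With this definition the triangle commutes by inspection: $(-)_*$ sends $x$ to $x_*$, and $\Phi(x_*)=(\varphi\mapsto\varphi\kleisli x_*)=(\varphi\mapsto\varphi(x))=\ev_{X,x}=\eta_X(x)$. Everything then reduces to proving that $\Phi$ is a well-defined order-isomorphism.

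For well-definedness I must check that $\Phi(\psi)$ is a $\Tth$-frame homomorphism. Preservation of weighted limits is automatic, since $\Phi(\psi)=[\varphi_\psi,-]$ is representable. For $\Tth$-compatibility and preservation of $\Tth$-weighted colimits I would avoid a direct computation by using the isomorphism $\V^{\yoneda}\:\V^{\tilde X}\to\V^X$ of the Remark following Definition \ref{T-suprema}, which identifies $\Omega(X)$ with $\Omega(\tilde X)$ in $\Frm{\Tth}$. Under $\yoneda_*\dashv\yoneda^*$ a left adjoint $\psi\:E\kmodto X$ corresponds to a left adjoint $\tilde\psi=\yoneda_*\kleisli\psi\:E\kmodto\tilde X$, and since $\tilde X$ is Cauchy complete the latter is $y_*$ for a point $y\in\tilde X$. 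A short computation with $\varphi=\tilde\varphi\kleisli\yoneda_*$ then shows $\Phi(\psi)$ is carried to $\ev_{\tilde X,y}$, which is a $\Tth$-frame homomorphism by Proposition \ref{nattrans} applied to $\tilde X$. Injectivity and order-reflection of $\Phi$ are routine afterwards: the representing object determines $[\varphi_\psi,-]$, $\varphi_\psi$ has a unique left adjoint, and the two order-reversals (passing to right adjoints, then to the antitone first hom-variable) cancel, so $\Phi(\psi)\le\Phi(\psi')$ translates back into $\psi\le\psi'$.

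The substance of the proof is surjectivity. Given a $\Tth$-frame homomorphism $h\:\V^X\to\V$, I would first exploit that $h$ preserves all weighted limits and that $\V^X$ is a complete $\V$-category (it is closed under weighted limits in the total $\V$-category $P(SX)$) to invoke enriched representability: $h\cong[\varphi_0,-]$ for a unique $\varphi_0\in\V^X$. It remains to see that the further hypothesis that $h$ preserves $\Tth$-weighted colimits forces $\varphi_0$ to be a right adjoint in $\Mod{\Tth}$, say $\varphi_0=\varphi_\psi$ with $\psi\:E\kmodto X$ a map; once this is known, $h=[\varphi_\psi,-]=-\kleisli\psi=\Phi(\psi)$ by the identity above, and $\psi$ lies in $\Map(\Mod{\Tth})(E,X)$. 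This implication is the crux and the main obstacle. The plan is to produce $\psi$ as the $\Tth$-colimit in $X$ (equivalently in $\tilde X$) of the Yoneda data of $\varphi_0$, the point being that $h$-preservation of $\Tth$-colimits is precisely what guarantees that this colimit exists as a genuine element; one then verifies the adjunction inequalities $k\le\varphi_0\kleisli\psi$ and $\psi\kleisli\varphi_0\le a$ by feeding the canonical $\Tth$-generated presentation of $\varphi_0$ into $h=[\varphi_0,-]$ and invoking its preservation of $\Tth$-colimits together with the distributivity law built into $\V^X$ as a $\Tth$-frame (Corollary \ref{functor1}). The remaining manipulations are bookkeeping around the identification of $\Tth$-distributors with $\V$-valued $\Tth$-functors (Theorem \ref{CharTMod}).

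The consequences are then formal. Since $\Map(\Mod{\Tth})(E,X)$ is by definition the set of objects of the Cauchy completion $\tilde X$, the order-isomorphism $\Phi$ shows that $\pt(\Omega(X))=\Frm{\Tth}(\Omega(X),\V)$ has the same objects as $\tilde X$, and the commuting triangle gives $\eta_X=\Phi\circ(-)_*$. Because $\Phi$ is bijective, $\eta_X$ is surjective exactly when $(-)_*\:X\to\Map(\Mod{\Tth})(E,X)$ is surjective, i.e. when every left adjoint $\psi\:E\kmodto X$ equals $x_*$ for some $x\in X$; by the reduction to the case $Y=E$ noted after the definition of Cauchy completeness, this is precisely the statement that $X$ is Cauchy complete.
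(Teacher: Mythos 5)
Your first half is sound and essentially matches the paper's route: the map is the same one, namely $\psi\mapsto(-\kleisli\psi)=[\varphi_\psi,-]$ where $\psi\dashv\varphi_\psi$; the triangle commutes by the computation $\varphi\kleisli x_*=\varphi(x)=\ev_{X,x}(\varphi)$; the two order-reversals cancel so the map is an order-embedding; and well-definedness can indeed be handled by transporting along the isomorphism $\V^{\yoneda}\:\V^{\tilde X}\to\V^X$ to an evaluation $\ev_{\tilde X,y}$ (the paper does the same reduction to the Cauchy complete case inside its lemma showing that $[\varphi,-]$ preserves infima, cotensors, tensors and $\Tth$-suprema whenever $\psi\dashv\varphi$).

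The gap is exactly where you locate it, in surjectivity, and your sketched plan would not close it. You propose to produce $\psi$ as ``the $\Tth$-colimit in $X$ (equivalently in $\tilde X$) of the Yoneda data of $\varphi_0$'', with existence guaranteed by the hypothesis that $h$ preserves $\Tth$-weighted colimits. But preservation of colimits by $h\:\V^X\to\V$ creates no colimits in its \emph{domain}'s source category: $X$ is an arbitrary $\Tth$-category, and the Cauchy completion $\tilde X$ only admits colimits weighted by \emph{left adjoint} distributors, so the existence of the colimit you want is essentially equivalent to $\varphi_0$ being a right adjoint --- which is precisely what you are trying to prove; the plan is circular. (Also, the distributivity law of Definition \ref{TFrm} plays no role in this step.) The paper's argument (Proposition \ref{FrmMorph}, (iii)$\Rightarrow$(i)) avoids any appeal to colimits in $X$ or $\tilde X$: it defines the candidate left adjoint \emph{pointwise out of $h$ itself}, namely $\psi:=h\cdot\mate{a}$, i.e. $\psi(\frx)=h(a(\frx,-))$ for $\frx\in TX$; shows that $\psi\:E\kmodto X$ is a $\Tth$-distributor via Theorem \ref{CharTMod}; and then verifies the adjunction inequalities directly. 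The counit is the elementary estimate $[\varphi_0,a(\frx,-)]\otimes\varphi_0(x)\le\hom(\varphi_0(x),a(\frx,x))\otimes\varphi_0(x)\le a(\frx,x)$, and the unit comes from feeding the canonical presentation $\varphi_0=\bigvee_{\frx\in TX}a(\frx,-)\otimes\xi\cdot T\varphi_0(\frx)$ into $h$: since $(\frx,x)\mapsto a(\frx,x)\otimes\xi\cdot T\varphi_0(\frx)$ is a $\Tth$-functor on $|X|\otimes X$, this supremum is a $\Tth$-supremum, so preservation of tensors and $\Tth$-suprema yields $\bigvee_{\frx}\psi(\frx)\otimes\xi\cdot T\varphi_0(\frx)=[\varphi_0,\varphi_0]\ge k$. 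Your instinct to use the canonical $\Tth$-generated presentation of $\varphi_0$ is correct, but without the explicit definition $\psi(\frx)=h(a(\frx,-))$ and the distributor verification, the crux of the theorem remains unproved.
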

The proof of the theorem above is the combination of the results below.
\begin{lemma}
Let $X=(X,a)$ be a $\Tth$-category and $\varphi\:X\to\V$ be a $\Tth$-functor. Then the representable $\V$-functor $\Phi=[\varphi,-]\:\Omega(X)\to\V$ is also a $\Tth$-graph morphism and preserves infima and cotensors. Moreover, if $\psi\dashv\varphi$ in $\Mod{\Tth}$, then $\Phi$ preserves also tensors and $\Tth$-suprema.
\end{lemma}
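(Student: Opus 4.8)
The plan is to handle the three assertions separately, using two complementary descriptions of $\Phi$. As a representable $\V$-functor $\Phi=\Omega(X)(\varphi,-)=[\varphi,-]$ on the complete $\V$-category $\Omega(X)$, the map $\Phi$ automatically preserves all weighted limits; since infima and cotensors are weighted limits, the claim that $\Phi$ preserves them is immediate from standard enriched category theory. The substance of the lemma is therefore (i) that $\Phi$ is a $\Tth$-graph morphism, and (ii) that under the extra hypothesis $\psi\dashv\varphi$ it also preserves tensors and $\Tth$-suprema. For (ii) I will use that, by the computation recorded just before Definition \ref{T-suprema}, the adjunction $\psi\dashv\varphi$ forces $\Phi(\varphi')=\fspstr{e_{\V^X}(\varphi)}{\varphi'}=\varphi'\kleisli\psi$, so that on objects $\Phi=(-)\kleisli\psi$, i.e.\ $\Phi(\varphi')=\bigvee_{\frx\in TX}\xi(T\varphi'(\frx))\otimes\psi(\frx)$. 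I also record that $\Tth$-suprema in $\Omega(X)$ are computed pointwise, since $\Omega(X)$ is closed in $P(SX)$ under $\Tth$-weighted colimits (Corollary \ref{functor1}).

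For the graph-morphism claim (i), unwinding the structure matrices $\fspstr{-}{-}$ on $\Omega(X)$ and $\hom_\xi$ on $\V$ together with the closed structure of $\V$, the condition amounts to showing, for every $\frp\in T(\Omega(X))$, $\varphi'\in\Omega(X)$ and $x\in X$, that $\varphi(x)\otimes\xi(T\Phi(\frp))\otimes\fspstr{\frp}{\varphi'}\le\varphi'(x)$. I would feed into the infimum defining $\fspstr{\frp}{\varphi'}$ the single witness $\frq=T\langle x,\id\rangle(\frp)\in T(X\times\Omega(X))$ coming from $\varphi'\mapsto(x,\varphi')$; using $T1=1$ and $Ty=e\cdot y$ for points one gets $T\pi_2(\frq)=\frp$, $T\pi_1(\frq)=e_X(x)$ and $T\ev(\frq)=T\ev_x(\frp)$, where $\ev_x\:\Omega(X)\to\V$ is evaluation at $x$. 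Since $e_X\le a$ gives $a(e_X(x),x)\ge k$, this reduces the goal to $\varphi(x)\otimes\xi(T\Phi(\frp))\le\xi(T\ev_x(\frp))$. The crucial ingredient is then the identity $\xi(T(v\otimes-)(\frv))=v\otimes\xi(\frv)$, valid for each $v\in\V$, which follows from condition (\ref{condition}) (that $\otimes$ is a $\mT$-homomorphism) and the $\mT$-algebra unit law $\xi\cdot e_\V=\id$. Applied with $v=\varphi(x)$ it rewrites $\varphi(x)\otimes\xi(T\Phi(\frp))$ as $\xi\bigl(T((\varphi(x)\otimes-)\cdot\Phi)(\frp)\bigr)$; as $(\varphi(x)\otimes-)\cdot\Phi\le\ev_x$ pointwise (the Yoneda inequality $\varphi(x)\otimes[\varphi,\varphi']\le\varphi'(x)$) and $\xi\cdot T(-)$ is monotone, the inequality drops out.

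Tensor-preservation in (ii) is then quick: writing $\Phi=(-)\kleisli\psi$ and invoking the same identity $\xi(T(v\otimes-)(\frv))=v\otimes\xi(\frv)$ now for arbitrary $v$, one obtains $\Phi(v\otimes\varphi')=\bigvee_{\frx}(v\otimes\xi(T\varphi'(\frx)))\otimes\psi(\frx)=v\otimes\Phi(\varphi')$. The delicate part, and the step I expect to be the main obstacle, is preservation of $\Tth$-suprema. Given a $\Tth$-diagram $D\:I\to\Omega(X)$ over an $\mT$-algebra $I=(I,\alpha)$, its supremum is the pointwise join $\bigvee_i D(i)$, and $\Phi(\bigvee_i D(i))\ge\bigvee_i\Phi(D(i))$ is mere monotonicity. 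For the reverse I would start from $\Phi(\bigvee_i D(i))=\bigvee_\frx\xi\bigl(T(\bigvee_i D(i))(\frx)\bigr)\otimes\psi(\frx)$ and use the naturality of the transformation $(\xi_X)_X$ (the third axiom for a topological theory) to express $\xi\bigl(T(\bigvee_i D(i))(\frx)\bigr)$ as a join $\bigvee_{\frw}\xi(T\bar D(\frw))$ over $\frw\in T(X\times I)$ with $T\pi_X(\frw)=\frx$, where $\bar D=\ev\cdot(\id_X\times D)\:X\otimes I\to\V$.

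Because $\ev$ and $D$ are $\Tth$-graph morphisms, $\bar D$ is a $\Tth$-functor, and evaluating its defining inequality at the algebra point $i_0=\alpha(T\pi_I(\frw))$ yields $a(\frx,x)\otimes\xi(T\bar D(\frw))\le D(i_0)(x)$ for all $x$. Combining this with the counit inequality $\varphi(x)\otimes\psi(\frx)\le a(\frx,x)$ (a special case of $\psi\kleisli\varphi\le a$, obtained by plugging $e_{TX}(\frx)$ into the counit and using $\xi\cdot e_\V=\id$) and the representable description $\Phi(D(i_0))=\bigwedge_x\hom(\varphi(x),D(i_0)(x))$, each term $\xi(T\bar D(\frw))\otimes\psi(T\pi_X(\frw))$ is bounded by $\Phi(D(i_0))\le\bigvee_i\Phi(D(i))$, whence the reverse inequality and equality. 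I expect the bookkeeping in the Beck--Chevalley/weak-pullback rewriting of $\xi\cdot T$ applied to a pointwise join to be the fiddliest point of the whole argument.
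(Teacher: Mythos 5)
Your proof is correct, but it takes a genuinely different route from the paper's in its two substantial parts. For the graph-morphism claim, the paper argues structurally: it factors $\Phi$ as $\bigwedge\cdot\Psi$ with $\Psi(\varphi')=\hom(\varphi(-),\varphi'(-))$, notes that $\bigwedge\:\V^{X_D}\to\V$ (with $X_D=(X,e_X)$ discrete) is a $\Tth$-graph morphism, and exhibits $\Psi$ as the mate of a composite of $\Tth$-graph morphisms; you instead verify the defining inequality elementwise, feeding the witness $\frq=T\langle x,\id\rangle(\frp)$ into the infimum defining $\fspstr{\frp}{\varphi'}$ and using the identity $\xi(T(v\otimes-)(\frv))=v\otimes\xi(\frv)$ (which indeed follows from condition (\ref{condition}) together with $T1=1$) plus monotonicity of $\xi\cdot T(-)$, which is part of axiom (c). For preservation of $\Tth$-suprema the divergence is sharper: the paper invokes the Remark on the Cauchy completion to assume, without loss of generality, that $X$ is Cauchy complete, so that $\varphi=a(e_X(x),-)$ is representable, $\Phi=\ev_{X,x}$ by Yoneda, and the claim is immediate from pointwiseness of $\Tth$-suprema; you avoid the Cauchy-completion detour entirely, writing $\Phi=(-)\kleisli\psi$, applying the naturality of $(\xi_X)_X$ (axiom (c)) to $\pi_X\:X\times I\to X$ to rewrite $\xi\bigl(T(\bigvee_iD(i))(\frx)\bigr)$ as $\bigvee_{\frw}\xi(T\bar{D}(\frw))$, and bounding each term by $\Phi(D(i_0))$ with $i_0=\alpha(T\pi_I(\frw))$ via the counit inequality $\varphi(x)\otimes\psi(\frx)\le a(\frx,x)$ and the functor inequality for $\bar{D}$. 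Both arguments rest on the same background fact that $\Omega(X)$ is closed in $P(SX)$ under $\Tth$-weighted colimits (Corollary \ref{functor1}), so that $\Tth$-suprema are pointwise. What the paper's route buys is brevity, at the price of leaning on the Cauchy-completion machinery and the transfer of the right adjoint $\varphi$ along $\yoneda$; what your route buys is a self-contained argument from the axioms of a strict topological theory, and it in effect verifies condition \eqref{CompCond} of Proposition \ref{FrmMorph} directly, so it dovetails with the implication structure there. Incidentally, the step you flag as potentially fiddly -- the rewriting of $\xi\cdot T$ applied to a pointwise join -- is exactly the naturality square of axiom (c) and requires no weak-pullback bookkeeping beyond what you already wrote.
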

\begin{proof}
Being a representable $\V$-functor, $\Phi$ preserves infima and cotensors. To see that $\Phi$ is a $\Tth$-graph morphism, recall first that
\[
[\varphi,\varphi']=\bigwedge_{x\in X}\hom(\varphi(x),\varphi'(x)).
\]
Since $\bigwedge\:\V^{X_D}\to\V$ (with $X_D=(X,e_X)$ being the discrete $\Tth$-category) is a $\Tth$-graph morphism, it is enough to show that
\[
\Psi\:\V^X\to\V^{X_D},\;\varphi'\mapsto \hom(\varphi(-),\varphi'(-))
\]
is a $\Tth$-graph morphism. But $\Psi$ is just the mate of the composite
\[
X_D\otimes\V^X\xrightarrow{\Delta\otimes\id}X_D\otimes X\otimes\V^X\xrightarrow{\varphi\otimes\ev}\V_D\otimes\V\xrightarrow{\hom}\V
\]
of $\Tth$-graph morphisms.

Assume now $\psi\dashv\varphi$ in $\Mod{\Tth}$. Then, for any $\varphi'\:X\to\V$ and $v\in\V$, 
\[
[\varphi,v\otimes\varphi']=(v\otimes\varphi')\kleisli\psi
=(v\kleisli\varphi')\kleisli\psi
=v\kleisli(\varphi'\kleisli\psi)
=v\otimes[\varphi,\varphi'].
\]
Finally, to see that $[\varphi,-]$ preserves $\Tth$-suprema, we assume $X$ to be Cauchy complete. Let $D\:I\to\V^X,\,i\mapsto\varphi_i$ be a $\Tth$-diagram. Then, since $\varphi=a(e_X(x),-)$ for some $x\in X$,
\[
[\varphi,\bigvee_{i\in I}\varphi_i]
=[a(e_X(x),-),\bigvee_{i\in I}\varphi_i]
=\left(\bigvee_{i\in I}\varphi_i\right)(x)
=\bigvee_{i\in I}\varphi_i(x)=\bigvee_{i\in I}[\varphi,\varphi_i].\qedhere
\]
\end{proof}
Hence $\psi\mapsto[\varphi,-]$ where $\psi\dashv\varphi$ defines a map $\Map(\Mod{\Tth})(E,X)\to\Frm{\Tth}(\Omega(X),\V)$, which is clearly injective and hence, by definition, an order-embedding. Before stating our next result, we recall that $\varphi=\bigvee_{\frx\in TX}(a(\frx,-)\otimes\xi\cdot T\varphi(\frx))$ for each $\Tth$-functor  $\varphi\:X\to\V$.
\begin{proposition}\label{FrmMorph}
Let $X=(X,a)$ be a $\Tth$-category and $\Phi\:\Omega(X)\to\V$ be a $\V$-functor. Then the following assertions are equivalent.
\begin{eqcond}
\item $\Phi=[\varphi,-]$ for some right adjoint $\Tth$-distributor $\varphi\:X\kmodto E$.
\item $\Phi$ preserves infima, tensors, cotensors and $\Tth$-suprema.
\item $\Phi$ preserves infima, tensors, cotensors and, for each $\varphi\in\V^X$,
\begin{align}\label{CompCond}
\Phi(\varphi)=\bigvee_{\frx\in TX}\Phi(a(\frx,-)\otimes\xi\cdot T\varphi(\frx)).\tag{*}
\end{align}
\end{eqcond}
\end{proposition}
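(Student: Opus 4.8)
The plan is to prove the cycle of implications (i) $\Rightarrow$ (ii) $\Rightarrow$ (iii) $\Rightarrow$ (i). The first of these is already in hand: it is exactly the content of the preceding lemma, which shows that for a right adjoint $\Tth$-distributor $\varphi$ the functor $\Phi=[\varphi,-]$ preserves infima, cotensors, tensors and $\Tth$-suprema. Since conditions (ii) and (iii) literally share the three clauses on infima, tensors and cotensors, in the step (ii) $\Rightarrow$ (iii) it remains only to derive the single identity \eqref{CompCond}, and in the step (iii) $\Rightarrow$ (i) I may freely assume preservation of all weighted limits and of tensors together with \eqref{CompCond}.

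For (ii) $\Rightarrow$ (iii) the idea is to read the recalled identity $\varphi=\bigvee_{\frx\in TX}(a(\frx,-)\otimes\xi\cdot T\varphi(\frx))$ as the assertion that $\varphi$ is a $\Tth$-supremum. Concretely, I would equip $TX$ with its free $\mT$-algebra structure $m_X$ and consider
\[
D\:(TX,m_X)\to\Omega(X),\quad \frx\mapsto a(\frx,-)\otimes\xi\cdot T\varphi(\frx);
\]
one checks that $D$ is a $\Tth$-graph morphism, hence a genuine $\Tth$-diagram. Its supremum, computed pointwise in $P(\ForgetToV X)$ and landing in $\Omega(X)$ because the latter is closed there under $\Tth$-colimits, is precisely $\varphi$ by the recalled identity. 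As $\Phi$ is $\Tth$-compatible, $\Phi\cdot D$ is a $\Tth$-diagram in $\V$, whose supremum in $\V$ is the join $\bigvee_{\frx}\Phi(D(\frx))$; preservation of $\Tth$-suprema then gives $\Phi(\varphi)=\bigvee_{\frx\in TX}\Phi(a(\frx,-)\otimes\xi\cdot T\varphi(\frx))$, i.e.\ \eqref{CompCond}.

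For (iii) $\Rightarrow$ (i) I would argue in two moves. First, since $\Phi$ preserves infima and cotensors, hence all weighted limits, and $\Omega(X)$ is a complete $\V$-category, representability of limit-preserving $\V$-functors into $\V$ produces $\beta\in\Omega(X)$, that is a $\Tth$-functor $\beta\:X\to\V$, with $\Phi=[\beta,-]$. Second, I would show that the $\Tth$-distributor $\beta\:X\kmodto E$ is right adjoint by exhibiting its left adjoint $\psi\:E\kmodto X$, defined by
\[
\psi(\frx)=\Phi(a(\frx,-))=\bigwedge_{x'}\hom(\beta(x'),a(\frx,x')).
\]
At the level of $\Mat{\V}$ the definition of $\psi$ records the pointwise inequality $\psi(\frx)\otimes\beta(x')\le a(\frx,x')$; propagating this through $m_X$ and $\xi$ to the Kleisli composite, and invoking the transitivity $a\kleisli a\le a$ of $X$, yields the counit $\psi\kleisli\beta\le a$. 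The unit $k\le\beta\kleisli\psi$ is exactly where \eqref{CompCond} is used: computing $\beta\kleisli\psi=\bigvee_{\frx}\xi\cdot T\beta(\frx)\otimes\psi(\frx)$ and using that $\Phi$ preserves tensors, \eqref{CompCond} applied to $\varphi=\beta$ gives $\beta\kleisli\psi=\Phi(\beta)=[\beta,\beta]\ge k$. Hence $\psi\dashv\beta$, so $\beta$ is a right adjoint $\Tth$-distributor and $\Phi=[\beta,-]$, which is (i).

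I expect the representability step to be the main obstacle. When $\V=\two$ it is cheap: a map $\Omega(X)\to\two$ preserving infima has a preimage of the top element which is an up-set closed under infima, hence principal, so the map is represented by the infimum of its ``support''. For a general quantale no such shortcut is available and one must genuinely appeal to (the enriched form of) the adjoint functor theorem, checking that $\Omega(X)$ meets its hypotheses. By comparison, the remaining verifications --- that $D$ is a $\Tth$-graph morphism, and that the counit inequality survives passage through the monad --- are more computational but routine, relying only on the axioms on $\xi$, the lax functoriality of $\Txi$, and the transitivity of $X$.
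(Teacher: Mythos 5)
Your proof follows the paper's own argument almost step for step: (i)$\Rightarrow$(ii) is quoted from the preceding lemma; for (ii)$\Rightarrow$(iii) the paper likewise reads the recalled identity $\varphi=\bigvee_{\frx\in TX}(a(\frx,-)\otimes\xi\cdot T\varphi(\frx))$ as a $\Tth$-supremum over the free algebra $(TX,m_X)$, verifying the $\Tth$-graph morphism property of your $D$ by writing its mate $|X|\otimes X\to\V$ as an explicit composite of $\Tth$-functors; and for (iii)$\Rightarrow$(i) it takes the same $\psi(\frx)=\Phi(a(\frx,-))=[\varphi,a(\frx,-)]$, the same pointwise counit estimate $\psi(\frx)\otimes\varphi(x)\le a(\frx,x)$, and literally your unit computation combining tensor preservation with \eqref{CompCond}. (Two side remarks: the representability step you single out as ``the main obstacle'' is cheap in the posetal, quantale-enriched setting --- no solution-set condition arises, and the paper simply asserts that a $\V$-functor on the complete $\V$-category $\Omega(X)$ preserving infima and cotensors is representable. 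Conversely, your explicit propagation of the pointwise counit through $m_X$, $\xi$ and transitivity is a point the paper leaves implicit, so there you are being more careful than the source.)

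The one genuine gap is in (iii)$\Rightarrow$(i): assertion (i) demands that $\varphi$ be right adjoint \emph{in} $\Mod{\Tth}$, so the candidate left adjoint $\psi$ must itself be a $\Tth$-distributor $E\kmodto X$, i.e.\ satisfy $a\kleisli\psi\le\psi$ and $\psi\kleisli k_!\le\psi$. This is not a formal consequence of the unit and counit inequalities you establish for the matrix $\psi$: Kleisli composition of bare $\Tth$-matrices obeys only lax unit laws, and nothing forces an arbitrary matrix satisfying those two inequalities to absorb $a$; attempts to derive it formally from $k\le\varphi\kleisli\psi$ and $\psi\kleisli\varphi\le a$ run in circles. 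The paper settles this in one line: since $\Phi=[\varphi,-]$ is representable, the preceding lemma makes $\Phi$ a $\Tth$-graph morphism, and then $\psi=\Phi\cdot\mate{a}$ is a $\Tth$-distributor by Theorem \ref{CharTMod} (the characterisation of $\Tth$-distributors as $\V$-valued $\Tth$-functors). You have all the ingredients to fill this in, but as written your adjunction lives only at the level of $\Tth$-matrices, not in $\Mod{\Tth}$, so statement (i) is not yet reached.
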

\begin{proof}
(i)$\Rightarrow$(ii): Follows from the lemma above.\\
(ii)$\Rightarrow$(iii): It is enough to observe that
\[
|X|\otimes X\to\V,\;(\frx,x)\mapsto a(\frx,x)\otimes\xi\cdot T\varphi(\frx)
\]
is a $\Tth$-functor since it can be written as the composite
\[
|X|\otimes X\xrightarrow{\Delta\otimes\id_X}|X|\otimes|X|\otimes X \xrightarrow{T\varphi\otimes a}|\V|\otimes\V\xrightarrow{\xi\otimes\id_X}\V\otimes\V\xrightarrow{\otimes}\V
\]
of $\Tth$-functors.\\
(iii)$\Rightarrow$(i): Since $\Phi\:\V^X\to\V$ preserves infima and cotensors, $\Phi$ is representable by some $\varphi\in\V^X$, i.e.\ $\Phi=[\varphi,-]$. Hence, by the lemma above, $\Phi$ is a $\Tth$-graph morphism. We put $\psi\:=\Phi\cdot\mate{a}$,
\[
\xymatrix{|X|,X^\op\ar[r]^-{\mate{a}}\ar[rd]_\psi &\V^X\ar[d]^\Phi\\ & \V}
\]
then $\psi\:E\kmodto X$ is a $\Tth$-distributor by Theorem \ref{CharTMod}. We have, for any $\frx\in TX$ and $x\in X$,
\[
\psi(\frx)\otimes\varphi(x)
=[\varphi,a(\frx,-)]\otimes\varphi(x)
\le\hom(\varphi(x),a(\frx,x))\otimes\varphi(x)\le a(\frx,x).
\]
On the other hand, 
\begin{align*}
\bigvee_{\frx\in TX}\psi(\frx)\otimes\xi\cdot T\varphi(\frx)
&=\bigvee_{\frx\in TX}[\varphi,a(\frx,-)]\otimes\xi\cdot T\varphi(\frx)\\
&=\bigvee_{\frx\in TX}[\varphi,a(\frx,-)\otimes\xi\cdot T\varphi(\frx)]\\
&=[\varphi,\bigvee_{\frx\in TX}a(\frx,-)\otimes\xi\cdot T\varphi(\frx)] \\
&=[\varphi,\varphi] \\
&\ge k,
\end{align*}
we have shown that $\psi\dashv\varphi$.
\end{proof}
We conclude that the map $\Map(\Mod{\Tth})(E,X)\to\Frm{\Tth}(\Omega(X),\V),\;\psi\mapsto[\varphi,-]$ is actually bijective. Finally, for any $x\in X$ and each $\Tth$-functor $\varphi\:X\to\V$, we have
\[
[x^*,\varphi]=\varphi\kleisli x_*=\bigvee_{\frx\in TX}a(\frx,x)\otimes\xi\cdot T\varphi(\frx)=\varphi(x)=\ev_{X,x}(\varphi),
\]
which proves the commutativity of the diagram in Theorem \ref{MainThm}.

\section{Examples}

We consider first the identity theory for an arbitrary quantale $\V$, cf.\ Example \ref{ExTheories} (1); in this case, $\Cat{\Tth}=\Cat{\V}$ is the category of $\V$-enriched categories. A $\Tth$-diagram is just an ordinary diagram, and therefore $\Frm{\Tth}$ is the 2-category having as objects complete (and cocomplete) completely distributive $\V$-categories, and as morphisms all limit- and colimit-preserving functors between them. Writing $\Frm{\V}$ for this category, we {\em do} have an adjunction
\[
(\Cat{\V})^\op\adjunction{\pt}{\Omega}\Frm{\V},
\]
and $\eta_X\:X\to\pt(\Omega(X))$ is fully faithful for each $\V$-category $X$. The latter is a consequence of the well-known fact that $\V$ is \emph{initially dense} in $\Cat{\V}$ (see \cite{T_Haute_Bodeux}, for instance), i.e.\ for each $\V$-category $X$ the source $\Cat{\V}(X,\V)$ is initial (jointly fully faithful).

In particular, for $\V=\two$ (the two-element chain), the adjunction above specialises to ordered sets and completely distributive complete lattices
\[
\ORD^\op\adjunction{\pt}{\Omega}\catfont{CCD}
\]
which restricts to a dual equivalence between $\ORD$ and the category $\catfont{TAL}$ of totally algebraic complete lattices and suprema and infima preserving maps. And for $\V=[0,\infty]$ (the extended non-negative real numbers) we obtain an adjunction
\[
\MET^\op\adjunction{\pt}{\Omega}\catfont{CDMet}
\]
where $\catfont{CDMet}$ denotes the category of completely distributive metric spaces and limit- and colimit-preserving contraction maps. This adjunction restricts to a dual equivalence between the full subcategories of {\em Cauchy complete metric spaces} and totally algebraic metric spaces respectively. Here a metric space $X$ is completely distributive if it is cocomplete and the left adjoint $S\:[0,\infty]^{X^\op}\to X$ of the Yoneda embedding $\yoneda_X\:X\to [0,\infty]^{X^\op}$ has a further left adjoint $t_X\:X\to [0,\infty]^{X^\op}$. Furthermore, a completely distributive metric space $X$ is totally algebraic if $S$ restricts to an isomorphism $[0,\infty]^{A^\op}\cong X$ where $A\hookrightarrow X$ is the equaliser of $\yoneda_X$ and $t_X$. We refer to \cite{Stu_Dynamics} where complete distributivity and algebraicity are investigated in the context of quantaloid-enriched categories.

Finally, in the remainder of this section we consider the ultrafilter theories of Example \ref{ExTheories} (2--4); below we denote such a theory as $\Uth$. As recalled in Example \ref{ExCats}, if the underlying quantale is $\V=\two$, then $\Cat{\Uth_{\two}}=\TOP$ is the category of topological spaces; and if $\V=[0,\infty]$, then $\Cat{\Uth_{[0,\infty]}}=\AP$ is the category of approach spaces. Our aim is to show how, in general, the notion of $\Uth$-supremum captures precisely a finiteness condition; so that, consequently, the distributivity law for a $\Uth$-frame expresses that finite suprema must distribute over arbitrary infima. To prove this, we start with a well-known lemma providing a crucial tool when working with ultrafilters (a proof can be found in \cite{Joh_StoneSp}, for instance):
\begin{lemma}\label{Lemma_ExtExcl}
Let $X$ be a set, $\frj$ be an ideal and $\frf$ be a filter on $X$ with $\frf\cap\frj=\varnothing$. Then there exists an ultrafilter $\frx\in UX$ with $\frf\subseteq\frx$ and $\frx\cap\frj=\varnothing$.
\end{lemma}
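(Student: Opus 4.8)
The plan is a standard maximality argument via Zorn's lemma, followed by a verification that the maximal filter so obtained is in fact an ultrafilter.

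First I would consider the collection $\mathcal{P}$ of all proper filters $\frb$ on $X$ satisfying $\frf\subseteq\frb$ and $\frb\cap\frj=\varnothing$, ordered by inclusion. This collection is nonempty, since $\frf$ itself belongs to it. Every chain $(\frb_i)_i$ in $\mathcal{P}$ admits an upper bound: the union $\bigcup_i\frb_i$ is again a proper filter (directedness of the chain ensures closure under finite intersection, and no $\frb_i$ contains $\varnothing$), it clearly contains $\frf$, and it is disjoint from $\frj$ because each $\frb_i$ is. Hence Zorn's lemma yields a maximal element $\frx\in\mathcal{P}$.

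Next I would show that $\frx$ is an ultrafilter, i.e.\ that for every $A\subseteq X$ either $A\in\frx$ or $X\setminus A\in\frx$. Arguing by contradiction, suppose $A\notin\frx$ and $X\setminus A\notin\frx$. The filter generated by $\frx$ and $A$, whose members are the sets containing $F\cap A$ for some $F\in\frx$, is proper (otherwise $F\cap A=\varnothing$ for some $F\in\frx$, forcing $X\setminus A\in\frx$) and strictly larger than $\frx$; by maximality it must therefore meet $\frj$, so there is $J\in\frj$ with $J\supseteq F_1\cap A$ for some $F_1\in\frx$. Symmetrically, there is $J'\in\frj$ with $J'\supseteq F_2\cap(X\setminus A)$ for some $F_2\in\frx$.

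The main (and really the only nontrivial) step is then to combine these two witnesses using the closure of the ideal under finite unions. Since $\frj$ is an ideal, $J\cup J'\in\frj$, and
\[
J\cup J'\supseteq(F_1\cap A)\cup(F_2\cap(X\setminus A))\supseteq F_1\cap F_2.
\]
As $F_1\cap F_2\in\frx$ and $\frx$ is upward closed, this gives $J\cup J'\in\frx\cap\frj$, contradicting $\frx\cap\frj=\varnothing$. Therefore $\frx$ is an ultrafilter with $\frf\subseteq\frx$ and $\frx\cap\frj=\varnothing$, as required. I expect no genuine obstacle beyond bookkeeping; the one point demanding care is checking that the two auxiliary extensions remain proper, which is precisely where the hypotheses $A\notin\frx$ and $X\setminus A\notin\frx$ are used.
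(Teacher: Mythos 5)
Your proof is correct. Note that the paper gives no proof of this lemma at all — it is stated as well known, with a pointer to Johnstone's \emph{Stone Spaces} — and your argument is exactly the standard one found in such references: Zorn's lemma produces a filter maximal among those containing $\frf$ and disjoint from $\frj$, and the distributivity step $(F_1\cap A)\cup(F_2\cap(X\setminus A))\supseteq F_1\cap F_2$, combined with closure of $\frj$ under finite unions, shows such a maximal filter is an ultrafilter (this is Stone's separation lemma specialised to the Boolean algebra of subsets of $X$; the only implicit convention is that $\varnothing\in\frj$, which guarantees $\frf$ is proper).
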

\noindent
For any $\Uth$-category $X=(X,a)$ and any $A\subseteq X$, the map $\varphi_A\:X\to\V\:x\mapsto\bigvee\{a(\fra,x)\mid\fra\in U(X),A\in\fra\}$
is in fact a $\Uth$-functor: for it is the composite 
$$X\xrightarrow{\mate{a}}\V^{|X|}\to\V^{|A|}\xrightarrow{\bigvee}\V$$ 
of $\Uth$-functors. Note also that, for $\frx\in UX$ and $A\in\frx$, $\xi\cdot U\varphi_A(\frx)\ge k$. Recall further that $\Uxi$ is the lax extension of the ultrafilter monad to $\Mat{\V}$. In the following proofs we shall write $\ll$ for the totally below relation of $\V$. 
\begin{lemma}
Let $X=(X,a)$ be a $\Uth$-category. For $\frx\in UX$ and $x\in X$,
\[\bigwedge\{\varphi_A(x)\mid A\in\frx\}=\bigvee\{\Uxi a(\frX,\doo{x})\mid\frX\in UUX,\,m_X(\frX)=\frx\}.\]
\end{lemma}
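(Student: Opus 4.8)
\emph{Overview.} The plan is to first collapse the right-hand side to an expression involving $\xi$ directly, and then to prove the resulting $\inf$-$\sup$/$\sup$-$\inf$ identity by an ultrafilter argument combining Lemma~\ref{Lemma_ExtExcl} with the complete distributivity of $\V$. Throughout write $g:=a(-,x)\: UX\to\V$, so that $\varphi_A(x)=\bigvee_{\fra\ni A}g(\fra)$, where $\fra\ni A$ abbreviates $A\in\fra$.

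\emph{Step 1: unwinding the lax extension.} First I would evaluate $\Uxi a(\frX,\doo x)$ using the defining formula for the lax extension applied to the matrix $a\: X\relto UX$ (so that $\Uxi a\: UX\relto UUX$). The constraint $U\pi_2(\frw)=e_X(x)=\doo x$ forces every $\frw\in U(UX\times X)$ in the defining supremum to concentrate on the fibre $UX\times\{x\}$; since the section $s\: UX\to UX\times X$, $\fra\mapsto(\fra,x)$, maps $UX$ bijectively onto that fibre, the extra constraint $U\pi_1(\frw)=\frX$ pins down $\frw=Us(\frX)$ uniquely. Hence the supremum degenerates to a single term, and as $a\cdot s=g$ one gets $\Uxi a(\frX,\doo x)=\xi\cdot Ug(\frX)$. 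Using the structure map $\xi(\frv)=\bigwedge_{B\in\frv}\bigvee B$ of the ultrafilter theories in question, and the fact that the members of $Ug(\frX)$ are exactly the $B$ with $g^{-1}(B)\in\frX$ (compared with the sets $g(\mathcal C)$, $\mathcal C\in\frX$), this rewrites as $\xi\cdot Ug(\frX)=\bigwedge_{\mathcal C\in\frX}\bigvee_{\fra\in\mathcal C}g(\fra)$. The claim is thereby reduced to the lattice identity
\[
\bigwedge_{A\in\frx}\bigvee_{\fra\ni A}g(\fra)=\bigvee_{\substack{\frX\in UUX\\ m_X(\frX)=\frx}}\ \bigwedge_{\mathcal C\in\frX}\bigvee_{\fra\in\mathcal C}g(\fra),
\]
whose two sides I abbreviate $L$ and $R$.

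\emph{Step 2: the easy inequality $R\le L$.} For any $\frX$ with $m_X(\frX)=\frx$ and any $A\in\frx$, the set $A^\#:=\{\fra\mid A\in\fra\}$ lies in $\frX$ (this is precisely what $m_X(\frX)=\frx$ says), so the inner infimum is bounded above by $\bigvee_{\fra\in A^\#}g(\fra)=\bigvee_{\fra\ni A}g(\fra)$; taking the infimum over $A\in\frx$ and then the supremum over $\frX$ yields $R\le L$.

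\emph{Step 3: the hard inequality $L\le R$.} Here I would use complete distributivity: since $L=\bigvee\{u\mid u\ll L\}$, it suffices to prove $u\le R$ for each $u\ll L$. Fixing such a $u$, from $u\ll L\le\bigvee_{\fra\ni A}g(\fra)$ and the definition of the totally-below relation (applied to the family $\{g(\fra)\mid \fra\ni A\}$) I obtain, for every $A\in\frx$, some $\fra$ with $A\in\fra$ and $u\le g(\fra)$; equivalently, the set $G_u:=\{\fra\in UX\mid u\le g(\fra)\}$ meets every $A^\#$ with $A\in\frx$. To assemble these choices into one ultrafilter, apply Lemma~\ref{Lemma_ExtExcl} on the set $UX$ with the filter $\frf$ generated by $\{A^\#\mid A\in\frx\}$ and the ideal $\frj:=\mathcal P(UX\setminus G_u)$ of subsets disjoint from $G_u$. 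Because $A^\#\cap B^\#=(A\cap B)^\#$ and $\frx$ is closed under finite intersections, each member of $\frf$ contains some $A^\#$ and hence meets $G_u$, giving $\frf\cap\frj=\varnothing$; the lemma then produces an ultrafilter $\frX$ with $\frf\subseteq\frX$ and $G_u\in\frX$. The first condition gives $m_X(\frX)=\frx$, while the second forces $u\le\bigvee_{\fra\in\mathcal C}g(\fra)$ for every $\mathcal C\in\frX$ (since $\mathcal C\cap G_u\neq\varnothing$), whence $u\le\bigwedge_{\mathcal C\in\frX}\bigvee_{\fra\in\mathcal C}g(\fra)\le R$.

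\emph{Main obstacle.} The delicate point is exactly Step~3: the obvious ``ideal of bad sets'' $\{\mathcal C\mid \bigvee_{\fra\in\mathcal C}g(\fra)\not\ge u\}$ need not be closed under finite unions (that would require $u$ to be join-prime), so Lemma~\ref{Lemma_ExtExcl} is not directly applicable to it. The fix is to exclude instead the single set $UX\setminus G_u$, i.e.\ to force $G_u$ \emph{into} the ultrafilter; complete distributivity is what guarantees, via the totally-below relation, that $G_u$ is consistent with the filter generated by the $A^\#$.
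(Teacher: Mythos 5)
Your proof is correct and takes essentially the same route as the paper: the hard inequality is obtained exactly as there, by fixing $u$ totally below the left-hand side, playing the ideal of subsets of $UX$ disjoint from your $G_u$ against the filter generated by the sets $A^\#$ ($A\in\frx$) via Lemma~\ref{Lemma_ExtExcl}, and concluding by complete distributivity of $\V$. The only cosmetic differences are that you prove explicitly the identity $\Uxi a(\frX,\doo{x})=\bigwedge_{\calC\in\frX}\bigvee_{\fra\in\calC}a(\fra,x)$, which the paper uses without comment, and that you deduce the easy inequality combinatorially from this formula rather than from the $\Uth$-category axioms together with the $\Uth$-functoriality of $\varphi_A$.
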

\begin{proof}
Clearly, $\Uxi a(\frX,\doo{x})\le a(\frx,x)\le \hom(\xi\cdot U\varphi_A(\frx),\varphi_A(x))\le\varphi_A(x)$ for $\frX\in UUX$ with $m_X(\frX)=\frx$ and $A\in\frx$. Let now $u\in\V$ with $u\ll\bigwedge_{A\in\frx}\varphi_A(x)$. Putting $\frj=\{\calB\subseteq UX\mid \forall\fry\in\calB\,.\,u\not\le a(\fry,x)\}$ defines an ideal disjoint from $\frx^\#=\{UA\mid A\in\frx\}$. Let $\frX\in UUX$ be an ultrafilter with $\frx^\#\subseteq\frX$ and $\frX\cap\frj=\varnothing$. Then $m_X(\frX)=\frx$ and
$\Uxi a(\frX,\doo{x})=\bigwedge_{\calA\in\frX}\bigvee_{\fra\in\calA}a(\fra,x)\ge u$.
\end{proof}
\begin{corollary}
Let $X=(X,a)$ be a $\Uth$-category, $\frx\in UX$ and $x\in X$. Then $a(\frx,x)=\bigwedge\{\varphi_A(x)\mid A\in\frx\}$.
\end{corollary}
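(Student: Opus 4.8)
The plan is to read off the Corollary by squeezing $a(\frx,x)$ between the two sides of the preceding Lemma. That Lemma already identifies $\bigwedge\{\varphi_A(x)\mid A\in\frx\}$ with the supremum $\bigvee\{\Uxi a(\frX,\doo{x})\mid\frX\in UUX,\,m_X(\frX)=\frx\}$, so it suffices to show that $a(\frx,x)$ is sandwiched between these two expressions; since the bounds are equal, $a(\frx,x)$ must equal both. Thus no new ultrafilter combinatorics are needed beyond what the Lemma (via Lemma~\ref{Lemma_ExtExcl}) already supplies.

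First I would establish the upper bound $a(\frx,x)\le\bigwedge\{\varphi_A(x)\mid A\in\frx\}$. This is immediate from the definition of $\varphi_A$: whenever $A\in\frx$, the ultrafilter $\frx$ itself is one of the $\fra$ with $A\in\fra$, so $a(\frx,x)$ occurs as a term in the supremum defining $\varphi_A(x)$; hence $a(\frx,x)\le\varphi_A(x)$ for every $A\in\frx$. (This inequality is already noted in the proof of the Lemma.)

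Next I would establish the lower bound $\bigvee\{\Uxi a(\frX,\doo{x})\mid m_X(\frX)=\frx\}\le a(\frx,x)$, which is the only place the $\Uth$-category axioms enter. The key is the composition inequality $a\kleisli a\le a$ together with reflexivity $e_X\le a$. Unwinding the Kleisli convolution $a\kleisli a=m_X\cdot\Uxi a\cdot a$ at the pair $(\frx,x)$ gives
\[(a\kleisli a)(\frx,x)=\bigvee_{\substack{\frX\in UUX\\ m_X(\frX)=\frx}}\ \bigvee_{\fry\in UX}\Uxi a(\frX,\fry)\otimes a(\fry,x).\]
Specialising the inner supremum to the principal ultrafilter $\fry=\doo{x}=e_X(x)$, and using that $e_X\le a$ forces $a(\doo{x},x)\ge k$, the corresponding summand dominates $\Uxi a(\frX,\doo{x})\otimes k=\Uxi a(\frX,\doo{x})$. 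Hence $\Uxi a(\frX,\doo{x})\le(a\kleisli a)(\frx,x)\le a(\frx,x)$ for every $\frX$ with $m_X(\frX)=\frx$, and taking the supremum over such $\frX$ yields the bound.

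Finally, chaining the upper bound, the Lemma's equality, and the lower bound gives
\[a(\frx,x)\ \le\ \bigwedge_{A\in\frx}\varphi_A(x)\ =\ \bigvee_{m_X(\frX)=\frx}\Uxi a(\frX,\doo{x})\ \le\ a(\frx,x),\]
so all three coincide and in particular $a(\frx,x)=\bigwedge\{\varphi_A(x)\mid A\in\frx\}$. I do not expect a genuine obstacle here: all the real content sits in the Lemma. The only thing to handle carefully is the unwinding of the Kleisli composition and the direction of the matrix entries, so that the reflexivity term $a(\doo{x},x)\ge k$ is genuinely available to absorb the factor $\otimes k$ and leave $\Uxi a(\frX,\doo{x})$ underneath.
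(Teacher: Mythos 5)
Correct, and essentially the paper's own argument: the paper's one-line proof likewise squeezes $a(\frx,x)$ between the two sides of the preceding Lemma, quoting the identity $a(\frx,x)=\bigvee\{\Uxi a(\frX,\doo{x})\mid m_X(\frX)=\frx\}$ (i.e.\ the unitarity $a\kleisli e_X=a$ of a $\Uth$-category), whose nontrivial half $a\le a\kleisli e_X$ you neatly sidestep by using the trivial bound $a(\frx,x)\le\varphi_A(x)$ instead. Your two inequalities are sound as stated --- the Kleisli unwinding of $a\kleisli a$ and the use of $e_X\le a$ and $a\kleisli a\le a$ are exactly right --- so nothing is missing.
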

\begin{proof}
Because $a(\frx,x)=\bigvee\{\Uxi a(\frX,\doo{x})\mid\frX,\,m_X(\frX)=\frx\}=\bigwedge\{\varphi_A(x)\mid A\in\frx\}$.
\end{proof}
\begin{corollary}
For each $\Uth$-category $X$, the source $\Cat{\Uth}(X,\V)$ is initial (i.e.\  jointly fully faithful).
\end{corollary}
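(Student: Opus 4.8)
The plan is to unwind what ``initial'' (equivalently, jointly fully faithful) means for a source in $\Cat{\Uth}$, and then to observe that the required identity has essentially already been supplied by the preceding Corollary. Since $\Cat{\Uth}$ is topological over $\SET$, the source $(\varphi\:X\to\V)_{\varphi\in\Cat{\Uth}(X,\V)}$ is jointly fully faithful precisely when the structure $a$ of $X$ agrees with the initial structure induced by all these $\Uth$-functors into $\V=(\V,\hom_\xi)$, i.e.\ when
\[
a(\frx,x)=\bigwedge_{\varphi\in\Cat{\Uth}(X,\V)}\hom(\xi\cdot U\varphi(\frx),\varphi(x))
\]
for all $\frx\in UX$ and $x\in X$. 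Thus the whole task reduces to verifying this single equality, and the argument naturally splits into the two inequalities.

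First I would record the inequality $a(\frx,x)\le\bigwedge_{\varphi}\hom(\xi\cdot U\varphi(\frx),\varphi(x))$, which comes for free: each $\varphi\in\Cat{\Uth}(X,\V)$ is a $\Uth$-functor, so the defining condition $U\varphi\cdot a\le\hom_\xi\cdot\varphi$ unwinds, entry by entry, to $a(\frx,x)\le\hom(\xi\cdot U\varphi(\frx),\varphi(x))$; taking the infimum over all $\varphi$ preserves the bound. For the reverse inequality $a(\frx,x)\ge\bigwedge_{\varphi}\hom(\xi\cdot U\varphi(\frx),\varphi(x))$ I would not attempt to control the full infimum directly, but instead restrict attention to the distinguished subfamily $\{\varphi_A\mid A\in\frx\}$ constructed above, each member of which is a $\Uth$-functor. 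The key input is the observation (recorded just before the previous Lemma) that $\xi\cdot U\varphi_A(\frx)\ge k$ whenever $A\in\frx$, so that by antitonicity of $\hom(-,v)$,
\[
\hom(\xi\cdot U\varphi_A(\frx),\varphi_A(x))\le\hom(k,\varphi_A(x))=\varphi_A(x).
\]
Hence the full infimum is bounded above by $\bigwedge_{A\in\frx}\varphi_A(x)$, and the preceding Corollary identifies this last quantity as $a(\frx,x)$. Sandwiching the two inequalities gives the equality, and therefore initiality.

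The proof is short precisely because the genuine work is already done in the two preceding lemmas, which produce the test family $\varphi_A$ and establish $a(\frx,x)=\bigwedge\{\varphi_A(x)\mid A\in\frx\}$; the only conceptual step remaining is to match the topological notion of an initial source with that enriched formula. Accordingly, I expect the main (and essentially only) obstacle to be a matter of bookkeeping: correctly identifying the initial $\Uth$-category structure determined by a source of $\Uth$-functors valued in $\V=(\V,\hom_\xi)$, after which everything follows by assembling results already proved.
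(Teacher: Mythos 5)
Your proof is correct and follows exactly the route the paper intends: the corollary is stated without proof precisely because it is the combination of the $\Uth$-functoriality inequality $a(\frx,x)\le\hom(\xi\cdot U\varphi(\frx),\varphi(x))$, the noted fact $\xi\cdot U\varphi_A(\frx)\ge k$ for $A\in\frx$, and the preceding corollary $a(\frx,x)=\bigwedge\{\varphi_A(x)\mid A\in\frx\}$, assembled just as you do. Your identification of initiality with the structure equality $a(\frx,x)=\bigwedge_{\varphi}\hom(\xi\cdot U\varphi(\frx),\varphi(x))$ is the right bookkeeping, since $\Cat{\Uth}$ is topological over $\SET$.
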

\noindent
We can now show how the ultrafilter monad allows us to capture a finiteness condition, under some strong assumptions on $\V$:
\begin{proposition}\label{ultrafilter_case}
Assume that the quantale $\V$ satisfies $\top=k$, $\{u\in\V\mid u\ll k\}$ is directed and $k\le u\vee v$ implies $k\le u$ or $k\le v$, for all $u,v\in\V$. Let $\Phi\:\Omega(X)\to\V$ be a $\V$-functor which preserves infima, tensors and cotensors. Then $\Phi$ preserves $\Uth$-suprema if and only if $\Phi$ preserves finite suprema.
\end{proposition}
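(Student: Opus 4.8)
The plan is to exploit the special structure of the ultrafilter monad $\mU$ together with the three hypotheses on $\V$ to show that a $\Uth$-diagram $D\:I\to\Omega(X)$ (with $I$ a $\mU$-algebra, i.e.\ a compact Hausdorff space) behaves, as far as its supremum is concerned, like a finite diagram. The key point is that the $\mT$-algebras for the ultrafilter monad are exactly the compact Hausdorff spaces, so a $\Uth$-diagram is indexed by such a space $I$. The crucial structural fact I would try to extract is that the $\Uth$-graph-morphism condition on $D\:I\to\Omega(X)$ forces the ``spread'' of the family $(D(i))_{i\in I}$ to be controlled by convergence in $I$: if an ultrafilter $\mathfrak{i}\in UI$ converges to $i_0$ (via the algebra structure $\alpha\:UI\to I$), then $D(i_0)$ must dominate a suitable $\xi$-weighted combination of the $D(i)$.

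The backward implication is the easy one: a finite supremum is the supremum of a $\Uth$-diagram indexed by any finite discrete compact Hausdorff space, so preservation of $\Uth$-suprema trivially gives preservation of finite suprema. For the forward implication I would argue by contradiction, or directly, that under the hypotheses $\top=k$, directedness of $\{u\ll k\}$, and the primeness condition $k\le u\vee v\Rightarrow k\le u$ or $k\le v$, the supremum of a $\Uth$-diagram $\bigvee_{i\in I}D(i)$ is already attained on a \emph{finite} subset of $I$, at least ``up to the relation $\ll$''. The primeness condition is precisely what makes $k$ behave like the top of a prime filter, so that $k\le\bigvee_{i\in F}v_i$ for a finite $F$ whenever $k$ is below the full supremum; combined with directedness of the way-below-$k$ elements, this should let me approximate the full supremum by finite sub-suprema. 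Concretely, I would fix the value to be tested, rewrite $\bigvee_{i\in I}D(i)$ pointwise in $\Omega(X)\subseteq P(SX)$, and use the previous Lemma/Corollary (expressing $a(\frx,x)$ as an infimum of the $\varphi_A(x)$) to translate the statement into a statement about ultrafilters and the compact Hausdorff structure of $I$.

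The main obstacle I expect is organising the interplay between the two ``layers'' of structure. Suprema in $\Omega(X)=\V^X$ are \emph{not} computed pointwise, so I cannot simply reduce everything to pointwise computations in $\V$; I must instead use the completely distributive presheaf category $P(SX)$ in which $\V^X$ sits, and carefully track which suprema are $\Tth$-generated. The genuinely delicate step will be showing that the compactness of the indexing space $I$ forces a finite subcover: I anticipate needing Lemma \ref{Lemma_ExtExcl} (the ideal/filter separation via ultrafilters) to produce, from any failure of finiteness, an ultrafilter $\mathfrak{i}$ on $I$ witnessing a ``small'' value that escapes every finite sub-supremum, and then to derive a contradiction with the $\Uth$-graph-morphism condition on $D$ at the limit point $\alpha(\mathfrak{i})\in I$. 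Making the passage between the prime/directedness hypotheses on $\V$ and the topological compactness of $I$ precise is where the real work lies; the rest should be a matter of assembling the representability and distributivity facts already established.
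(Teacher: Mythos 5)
Your backward direction is fine (finite discrete spaces are $\mU$-algebras, so finite families are $\Uth$-diagrams), and it matches the paper. But your forward direction rests on a claim that is false: that compactness of the indexing $\mU$-algebra $I$ forces the supremum of a $\Uth$-diagram to be attained, up to $\ll$, on a finite subset of $I$. Take the theory $\Uth_{\two}$ and $X=\R$, so that $\Omega(X)$ is the co-frame of closed subsets; let $I=\{0\}\cup\{1/n\mid n\ge 1\}$ with its compact Hausdorff topology and $D\:I\to\Omega(X)$, $D(i)=\{i\}$. The $\Uth$-graph morphism condition here amounts to upper semicontinuity of the family of closed sets, which holds, so this is a $\Uth$-diagram; its supremum is the infinite closed set $I$ itself, and in $\two$ the relation $\ll$ coincides with $\le$, so there is no finite approximation whatsoever. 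Your auxiliary claim that primeness makes $k$ finitely accessible (``$k\le\bigvee_{i\in I}v_i$ implies $k\le\bigvee_{i\in F}v_i$ for some finite $F$'') also fails, already in $\V=[0,\infty]$: there $\bigvee_n 1/n=\inf_n 1/n=0=k$, yet no finite subfamily has supremum $k$. So no finite-subcover/contradiction argument on $I$ can succeed; nothing about $\Uth$-suprema in $\Omega(X)$ is ``essentially finite'', and the proposition cannot be proved by reducing $\Uth$-diagrams to finite ones.

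What makes the statement true is a property of the maps $\Phi$, not of the diagrams, and this is the paper's route, which your plan does not contain. Since $\Phi$ preserves infima and cotensors, it is representable, $\Phi=[\varphi,-]$ for some $\varphi\in\Omega(X)$; preservation of \emph{finite} suprema together with the primeness hypothesis on $k$ makes $\varphi$ irreducible, i.e.\ $\varphi\le\varphi_1\vee\varphi_2$ implies $\varphi\le\varphi_1$ or $\varphi\le\varphi_2$. Lemma \ref{Lemma_ExtExcl} is then applied on the set $X$ --- not on $I$ --- to the filter base $\{A_u\mid u\ll k\}$ with $A_u=\{x\in X\mid u\le\varphi(x)\}$ (this is where directedness of $\{u\ll k\}$ enters) and the ideal $\{B\subseteq X\mid\varphi\not\le\varphi_B\}$, producing an ultrafilter $\frx\in UX$ with $\varphi=a(\frx,-)$: a ``generic point'' for $\varphi$. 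Finally, arbitrary $\Uth$-diagrams are never manipulated at all: by Proposition \ref{FrmMorph}, preservation of $\Uth$-suprema is equivalent to the single identity \eqref{CompCond}, which is verified directly for $\Phi=[a(\frx,-),-]$. The representability of $\Phi$, the irreducibility of its representing presheaf, and the reduction through Proposition \ref{FrmMorph} are not details to be filled in later --- they are the entire content of the proof, and your proposal, which instead aims the ultrafilter lemma at the index space $I$, has no workable substitute for them.
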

\begin{proof}
Clearly, if $\Phi$ preserves $\Uth$-suprema then $\Phi$ preserves finite suprema. Assume now that $\Phi$ preserves finite suprema, we have to show \eqref{CompCond} in Proposition \ref{FrmMorph}. Note that $\Phi$ is necessarily of the form $\Phi=[\varphi,-]$, for some $\varphi\in\Omega(X)$. Furthermore, by our conditions on $\V$ and since $\Phi$ preserves finite suprema, $\varphi\le\varphi_1\vee\varphi_2$ implies $\varphi\le\varphi_1$ or $\varphi\le\varphi_2$, for $\varphi_1,\varphi_2\in\Omega(X)$. We start by showing that there exists an ultrafilter $\frx\in UX$ with $\varphi=a(\frx,-)$ and $k=\xi\cdot U\varphi(\frx)$. This generalises a well-known property of irreducible closed subsets of a topological space as well as of approach prime elements in the regular function frame of an approach space (see Proposition 5.7 of \cite{BLO_AFrm}).

To this end, note first that $k=\bigvee\{\varphi(x)\mid x\in X\}$. In fact, with $u=\bigvee\{\varphi(x)\mid x\in X\}$, one has $k=\Phi(\varphi)\le\Phi(u)=u$. Let now $u\ll k$ and put $A_u=\{x\in X\mid u\le\varphi(x)\}$. We show that $\varphi\le\varphi_{A_u}$. Consider the set $A=\{x\in X\mid \varphi(x)\le\varphi_{A_u}(x)\}$ and put $v=\bigvee\{\varphi(x)\mid x\in X,x\notin A\}$. One has $A_u\subseteq A$, and therefore $k\neq v$ since otherwise there would exist some $x\in X$ with $u\le\varphi(x)$ and $x\notin A$. Consequently, $\varphi\not\le\varphi\wedge v$. By definition, $\varphi\le\varphi_{A_u}\vee(\varphi\wedge v)$, and we conclude $\varphi\le\varphi_{A_u}$. We have shown that the filter base
\[
 \frf=\{A_u\mid u\ll k\}
\]
is disjoint from the ideal
\[
 \frj=\{B\mid \varphi\not\le\varphi_B\},
\]
and therefore Lemma \ref{Lemma_ExtExcl} provides us with an ultrafilter $\frx\in UX$ with $\frx\cap\frj=\varnothing$. Hence,
\[
 a(\frx,x)=\bigwedge_{A\in\frx}\varphi_A(x)\ge\varphi(x)
\]
Furthermore, for any $u\ll k$,
\[
\varphi_{A_u}(x)=\bigvee_{\fry\in UA_u}a(\fry,x)\le
\bigvee_{\fry\in UA_u}\hom(\xi\cdot U\varphi(\fry),\varphi(x))\le\hom(u,\varphi(x)),
\]
and therefore
\[
a(\frx,x)\le\bigwedge_{u\ll k}\varphi_{A_u}(x)\le\bigwedge_{u\ll k}\hom(u,\varphi(x))
=\hom(\bigvee_{u\ll k}u,\varphi(x))=\varphi(x).
\]
Let now $\varphi'\in\Omega(X)$. Since $\Phi(\varphi')=[\varphi,\varphi']$, one has $\Phi(\varphi')\otimes\varphi(x)\le\varphi'(x)$ for every $x\in X$. Finally
\[
\Phi(a(\frx,-))\otimes\xi\cdot U\varphi'(\frx)
=\xi\cdot U\varphi'(\frx)
=\bigwedge_{A\in\frx}\bigvee_{x\in A}\varphi'(x)
\ge\bigwedge_{A\in\frx}\bigvee_{x\in A}\Phi(\varphi')\otimes\varphi(x)
\ge\Phi(\varphi')\otimes\xi\cdot U\varphi(\frx)=\Phi(\varphi').\qedhere
\]
\end{proof}
As a consequence, in the situation of the proposition above we can modify our definition of $\Frm{\Uth}$ (see Definition \ref{TFrm}) by replacing $\mU$-algebra with \emph{finite (and hence discrete) $\mU$-algebra} everywhere; and we do not need the $\Uth$-graph structure anymore. 

In particular, for $\V=\two$ we thus find that the objects of $\Frm{\Uth_{\two}}$ are complete ordered sets satisfying the co-frame law, and the morphisms are order-preserving maps which preserve all infima and finite suprema. In other words, we arrive at the usual category of co-frames and co-frame homomorphisms. It is well-known (as we recalled in Section \ref{Sec2}) that it is involved in a dual adjunction with $\Cat{\Uth_{\two}}=\TOP$. The situation is similar for $\V=[0,\infty]$. In this case a $\Uth_{[0,\infty]}$-frame is a complete (that is, one admitting all weighted limits and colimits; not to be confused with Cauchy complete) metric space where ``finite colimits commute with arbitrary limits'', and a homomorphism is a contraction map preserving all limits and finite colimits. Our perspective differs here from \cite{BLO_AFrm} where so-called ``approach frames'' were introduced as certain algebras; so far we do not know if both notions are equivalent and therefore leave this as an open problem.

\def\cprime{$'$}


\begin{thebibliography}{BLVO06}

\bibitem[Bar70]{Bar_RelAlg}
Michael Barr.
\newblock Relational algebras.
\newblock In {\em Reports of the Midwest Category Seminar, IV}, pages 39--55.
  Lecture Notes in Mathematics, Vol. 137. Springer, Berlin, 1970.

\bibitem[BLVO06]{BLO_AFrm}
Bernhard Banaschewski, Robert Lowen, and Cristophe van Olmen.
\newblock Sober approach spaces.
\newblock {\em Topology Appl.}, 153(16):3059--3070, 2006.

\bibitem[CH03]{CH_TopFeat}
Maria~Manuel Clementino and Dirk Hofmann.
\newblock Topological features of lax algebras.
\newblock {\em Appl. Categ. Structures}, 11(3):267--286, 2003.

\bibitem[CH04]{CH_ExtLax}
Maria~Manuel Clementino and Dirk Hofmann.
\newblock On extensions of lax monads.
\newblock {\em Theory Appl. Categ.}, 13:No. 3, 41--60, 2004.

\bibitem[CH09]{CH_Compl}
Maria~Manuel Clementino and Dirk Hofmann.
\newblock {L}awvere completeness in {T}opology.
\newblock {\em Appl. Categ. Structures}, 17:175--210, 2009,
  arXiv:math.CT/0704.3976.

\bibitem[CT03]{CT_MultiCat}
Maria~Manuel Clementino and Walter Tholen.
\newblock Metric, topology and multicategory---a common approach.
\newblock {\em J. Pure Appl. Algebra}, 179(1-2):13--47, 2003.

\bibitem[Hof05]{Hof_Quot}
Dirk Hofmann.
\newblock An algebraic description of regular epimorphisms in topology.
\newblock {\em J. Pure Appl. Algebra}, 199(1-3):71--86, 2005.

\bibitem[Hof07]{Hof_TopTh}
Dirk Hofmann.
\newblock Topological theories and closed objects.
\newblock {\em Adv. Math.}, 215(2):789--824, 2007.

\bibitem[HT10]{HT_LCls}
Dirk Hofmann and Walter Tholen.
\newblock {L}awvere completion and separation via closure.
\newblock \emph{Appl. Categ. Structures} \textbf{18}~(3),
  259--287, 2010.

\bibitem[Joh86]{Joh_StoneSp}
Peter~T. Johnstone.
\newblock {\em Stone spaces}, volume~3 of {\em Cambridge Studies in Advanced
  Mathematics}.
\newblock Cambridge University Press, Cambridge, 1986.
\newblock Reprint of the 1982 edition.

\bibitem[KS05]{Kelly_Schmitt} 
G. Max Kelly and Vincent Schmitt.
\newblock Notes on enriched categories with colimits of some class.
\newblock {\em Theory Appl. Categ.}, 14:399--423, 2005.

\bibitem[Law73]{Law_Metric} 
F. William Lawvere
\newblock Metric spaces, generalized logic and closed categories.
\newblock {\em Rend. Sem. Mat. Fis. Milano}, 43:135--166, 1973.

\bibitem[Low97]{Low_ApBook}
Robert Lowen.
\newblock {\em Approach spaces}.
\newblock Oxford Mathematical Monographs. The Clarendon Press Oxford University
  Press, New York, 1997.
\newblock The missing link in the topology-uniformity-metric triad, Oxford
  Science Publications.

\bibitem[Man69]{Man_TriplCompAlg}
Ernest~G. Manes.
\newblock A triple theoretic construction of compact algebras.
\newblock In {\em Sem. on Triples and Categorical Homology Theory (ETH,
  Z\"urich, 1966/67)}, pages 91--118. Springer, Berlin, 1969.

\bibitem[Moe02]{Moe_Tensor}
Ieke Moerdijk.
\newblock Monads on tensor categories.
\newblock {\em J. Pure Appl. Algebra}, 168(2-3):189--208, 2002.
\newblock Category theory 1999 (Coimbra).

\bibitem[PT91]{PT_Dual}
Hans-E. Porst and Walter Tholen.
\newblock Concrete dualities.
\newblock In {\em Category theory at work (Bremen, 1990)}, volume~18 of {\em
  Res. Exp. Math.}, pages 111--136. Heldermann, Berlin, 1991.

\bibitem[Sea05]{SEAL_LaxAlg}
Gavin~J. Seal.
\newblock Canonical and op-canonical lax algebras.
\newblock {\em Theory Appl. Categ.}, 14:221--243, 2005.

\bibitem[Sea09]{SEAL_Kleisli}
Gavin~J. Seal.
\newblock A {K}leisli-based approach to lax algebras.
\newblock {\em Appl. Categ. Structures}, 17(1):75--89, 2009.

\bibitem[Stu07]{Stu_Dynamics}
Isar Stubbe.
\newblock Towards ``dynamic domains'':totally continuous cocomplete
  {$\mathscr{Q}$}-categories.
\newblock {\em Theoret. Comput. Sci.}, 373(1-2):142--160, 2007.

\bibitem[Tho07]{T_Haute_Bodeux}
Walter Tholen.
\newblock Lax-algebraic methods in {G}eneral {T}opology.
\newblock Lecture notes. Available online at \url{http://www.math.yorku.ca/~tholen},
  2007.

\end{thebibliography}

\end{document}